\newcommand{\dsp}{\displaystyle}
\newcommand{\eps}{\varepsilon}
\newcommand{\Om}{\Omega}
\newcommand{\om}{\omega}
\newcommand{\mrm}[1]{\mathrm{#1}}
\newcommand{\Cplx}{\mathbb{C}}
\newcommand{\N}{\mathbb{N}}
\newcommand{\R}{\mathbb{R}}
\newcommand{\mH}{\mrm{H}}
\newcommand{\loc}{\mbox{\scriptsize loc}}
\newtheorem{theorem}{Theorem}[section]
\newtheorem{lemma}{Lemma}[section]
\newtheorem{remark}{Remark}[section]
\newtheorem{proposition}{Proposition}[section]
\begin{document}

~\vspace{0.0cm}
\begin{center}
{\sc \bf\LARGE  From zero transmission to trapped \\[6pt] modes in waveguides}
\end{center}

\begin{center}
\textsc{Lucas Chesnel}$^1$, \textsc{Vincent Pagneux}$^2$\\[16pt]
\begin{minipage}{0.96\textwidth}
{\small
$^1$ INRIA/Centre de math\'ematiques appliqu\'ees, \'Ecole Polytechnique, Universit\'e Paris-Saclay, Route de Saclay, 91128 Palaiseau, France;\\
$^2$ Laboratoire d'Acoustique de l'Universit\'e du Maine, Av. Olivier Messiaen, 72085 Le Mans, France.\\[10pt]
E-mails: \texttt{lucas.chesnel@inria.fr}, \texttt{vincent.pagneux@univ-lemans.fr}\\[-12pt]
\begin{center}
-- \today --
\end{center}
}
\end{minipage}
\end{center}
\vspace{0.4cm}

\noindent\textbf{Abstract.} 
We consider the time-harmonic scattering wave problem in a 2D waveguide 
at  wavenumber $k$ such that one mode is propagating in the far field. 
In a first step, for a given $k$, playing with one scattering branch of finite length, 
we demonstrate how to construct geometries with zero transmission. 
The main novelty in this result  is that the symmetry of the geometry is not needed : the proof relies on the unitary structure of the scattering matrix. 
Then, in a second step, from a waveguide with zero transmission, we show how to build geometries supporting trapped modes associated with eigenvalues embedded in the continuous spectrum. For this second construction, using the augmented scattering matrix and its unitarity, we play both with the geometry and the wavenumber. Finally, the mathematical analysis is supplemented by numerical illustrations of the results. \\

\noindent\textbf{Key words.} Waveguides, zero transmission, trapped modes, scattering matrix, asymptotic analysis.

\section{Introduction}

In this work, we are interested in the propagation of waves in a waveguide with two open channels (channels 1 and 2, see 
Figure \ref{DomainOriginal2D}). 
We work at a wavenumber $k$ sufficiently small so that only one wave can propagate in each of the channels. 
For the Neumann problem that we consider in the following, which appears for example for acoustics, for water-waves  or for electromagnetism, this wave is nothing but the planar wave. To describe the scattering process of the incident planar wave propagating in channel 1, classically one
introduces two complex coefficients, namely the \textit{reflection} and \textit{transmission coefficients}, denoted
$s_{11}$ and $s_{12}$, such that $s_{11}$ (resp. $s_{12}$) corresponds to the amplitude of the scattered field at infinity in channel 1 (resp. 2) (see (\ref{DefScatteringCoeff})). According to the energy conservation, we have
\[
|s_{11}|^2 + |s_{12}|^2 = 1.
\]
In the first part of the article, we are interested in constructing non-symmetric geometries such that the transmission coefficient $s_{12}$ is zero. In this case, all the energy is backscattered in channel 1, as if the waveguide was obstructed. This problem of \textit{zero transmission} is interesting by itself and seems to have received little attention in literature, especially from a theoretical point of view. The motivation for designing domains where $s_{12}=0$ is also linked to the second part of the paper where we show how to use them to find waveguides supporting so called \textit{trapped modes} associated with eigenvalues embedded in the continuous spectrum. We remind the reader that trapped modes are non zero solutions of the homogeneous wave Problem (\ref{PbInitial}) which are of finite energy. In particular, using Fourier decomposition, one shows that trapped modes are exponentially decaying at infinity. Unlike the zero transmission problem, the question of existence of trapped modes has been widely investigated in literature (see e.g. \cite{Urse51,Evan92,EvLV94,DaPa98,LiMc07,Naza10c,pagneux2013trapped}). Note that eigenvalues embedded in the continuous spectrum are also often called bound states in the continuum (BSCs or BICs) in quantum mechanics or in optics (see for example \cite{SaBR06,Mois09,GPRO10,zhen2014topological,gomis2017anisotropy} as well as the recent review \cite{HZSJS16}). \\
\newline 
One way to exhibit situations where $s_{12}=0$ is to use the so-called Fano resonance (see the seminal paper \cite{Fano61} and the review article \cite{MiFK10}). Let us present briefly the idea which is developed and justified in \cite{ShVe05,ShTu12,ShWe13,AbSh16,ChNaSu}. If trapped modes exist at a given wavenumber $k_0$ such that $k_0^2$ is an eigenvalue embedded in the continuous spectrum, then perturbing slightly the geometry allows one to exhibit settings where the scattering coefficients have a fast variation for $k$ moving on the real axis around $k_0$. And with an additional assumption of symmetry of the geometry, one can prove that $|s_{12}|$ passes exactly through $0$ and $1$. This process requires to start from a setting where it is known that there are trapped modes. We emphasize that trapped modes associated with eigenvalues embedded in the continuous spectrum are rather rare objects. In general, for a given geometry, the set of wavenumbers such that they exist is empty. Moreover, they are relatively unstable. If eigenvalues embedded in the continuous spectrum exist in a given setting, a small perturbation of the geometry will transform them into complex resonances as shown in \cite{AsPV00} (except if the perturbation is carefully chosen, see \cite{Naza13}).\\
\newline
In the present article, in order to create geometries where $s_{12}=0$, we follow another approach introduced in \cite{ChNPSu,ChPaSu}. As in \cite{ChNPSu,ChPaSu}, we assume that the waveguide is endowed with a branch of finite length $L-1$ (see Figure \ref{DomainOriginal2D} left). The scattering coefficients depend on the geometry, in particular on $L$. Computing an asymptotic expansion of $s_{11}=s_{11}(L)$, $s_{12}=s_{12}(L)$ as $L\to+\infty$ allows us to know the main behaviour of the complex curves $L\mapsto s_{11}(L)$, $L\mapsto s_{12}(L)$. When the geometry has some symmetries, it is shown in \cite{ChNPSu,ChPaSu} that asymptotically $L\mapsto s_{11}(L)$, $L\mapsto s_{12}(L)$ pass through zero periodically. Then, owing to the symmetry assumption, we can get more and exhibit geometries where \textit{zero reflection} ($s_{11}=0$, $|s_{12}|=1$), zero transmission ($|s_{11}|=1$, $s_{12}=0$) or \textit{perfect invisibility} ($s_{11}=0$, $s_{12}=1$) occur (not only asymptotically but exactly). In the present work, quite surprisingly, we prove that 
without this assumption of symmetry of the geometry, asymptotically $L\mapsto s_{12}(L)$ still passes through zero periodically. And moreover, we show that the unitary structure of the scattering matrix defined in (\ref{DefMatrixScattering}) is sufficient to conclude that $L\mapsto s_{12}(L)$ passes through zero exactly. We emphasize that without the assumption of symmetry, in general the curve for the reflection coefficient $L\mapsto s_{11}(L)$ does not pass through zero as $L\to+\infty$ (not even asymptotically). Thus, in the non-symmetric case we consider, we demonstrate zero transmission but zero reflection and so perfect invisibility, cannot be achieved. \\
\newline
In order to construct geometries supporting trapped modes, we will use a similar idea as before  but with a useful auxiliary object called the augmented scattering matrix  \cite{NaPl94bis,KaNa02,Naza06,Naza11} instead of the usual scattering matrix. To establish the existence of trapped modes, we will see that it is sufficient that a coefficient of the augmented scattering matrix, which is unitary, passes through the point of affix $-1+0i$. The unitary structure of the matrix will not be sufficient to obtain the desired result. But playing also a bit with the wavenumber, we will be able to circumvent the difficulty.\\
\newline
In the nicely written introduction of \cite{HeKN12}, from \cite{GPRO10}, the authors distinguish three main mechanisms to produce trapped modes associated with eigenvalues embedded in the continuous spectrum. The most common one is based on the decoupling between resonances and radiation modes due to symmetries of the problem \cite{Park66,Park67,EvLi91,EvLV94}. The second mechanism relies on the destructive interference between two resonances of a single resonator. It has been revealed by Friedrich and Wintgen in \cite{FrWi85} (see also \cite{SaBR06} and the review paper \cite{OkPR03}). The third idea to obtain trapped modes consists in using two resonant structures acting as a pair of mirrors. Tuning correctly the distance between the two mirrors, one can trap waves between them, the structure then being equivalent to a Fabry-Perot cavity. This technique has been used in \cite{McIv96,KuMc97,Port02,OrNK06,CaLo08,Sato12}. The latter mechanism is the one we will exploit in the present article. On the other hand, we shall limit ourselves to Helmholtz equation with Neumann boundary conditions. Dirichlet boundary conditions (for quantum waveguides for example) can be treated completely similarly. Thus, we can construct quantum waveguides supporting trapped modes. \\
\newline  
All through the article, for the sake of simplicity, we shall restrict to a very simple $2\mrm{D}$ waveguide. However, the techniques can be extended in a straightforward way to higher dimension and more complex geometries (see the discussion in Section \ref{SectionConclusion}). We will consider scattering problems in $\sf{T}$-shaped waveguides. In \cite{Naza10a,NaSh11}, the authors study the existence of discrete spectrum (spectrum below the continuous spectrum) for Dirichlet problems in such geometries. Let us mention also that the present work shares connections with \cite{SaBR06,DKLM07,BuSa11,HeKN12}. In the latter papers, the authors investigate the existence of trapped modes associated with eigenvalues 
embedded in the continuous spectrum in geometries similar to ours via numerical investigations or simplified models.\\
\newline
The outline is as follows. In Section \ref{SectionPerfectReflectivity}, for a given wavenumber below the first positive thresholds (monomode case), we explain how to construct geometries where the transmission coefficient is zero. In Section \ref{SectionTrappedModes}, we show how to design situations, playing both with the geometry and the wavenumber, such that trapped modes exist. In Section \ref{SectionNumExpe}, we provide numerical illustrations of the results. In the conclusion, we discuss possible generalizations of the present approach as well as open questions. Finally, in a short Annex, we detail the proofs of two auxiliary results used in the analysis. The main results of this work are Theorem \ref{MainThmPart1} (zero transmission) and Theorem \ref{MainThmPart2} (existence of trapped modes).

\section{Zero transmission}\label{SectionPerfectReflectivity}
In this first section, for a given wavenumber we explain how to construct waveguides where the transmission coefficient is zero. 

\subsection{Setting }

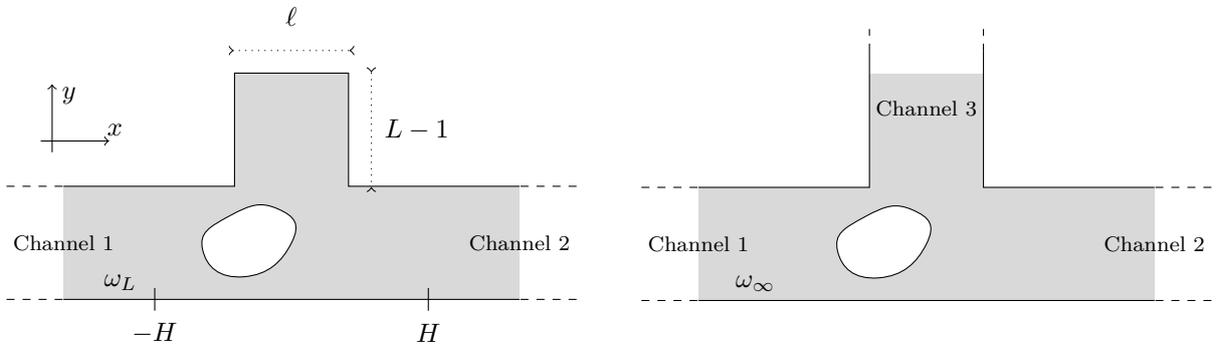
\begin{figure}[!ht]
\centering
\begin{tikzpicture}[scale=1.5]
\draw[fill=gray!30,draw=none](-2,1) rectangle (0.5,2);
\draw[fill=gray!30,draw=none](-3.5,1) rectangle (-2,2);
\draw[fill=gray!30,draw=none](-2,1.8) rectangle (-1,3);
\draw (-3.5,1)--(0.5,1); 
\draw (-3.5,2)--(-2,2)--(-2,3)--(-1,3)--(-1,2)--(0.5,2); 
\draw[dashed] (1,1)--(0.5,1); 
\draw[dashed] (1,2)--(0.5,2);
\draw[dashed] (-4,2)--(-3.5,2);
\draw[dashed] (-4,1)--(-3.5,1);
\draw[dotted,>-<] (-0.8,1.95)--(-0.8,3.05);
\draw[dotted,>-<] (-2.05,3.2)--(-0.95,3.2);
\begin{scope}[scale=0.6,yshift=1.5cm,xshift=-3.2cm]
\draw [fill=white] plot [smooth cycle, tension=1] coordinates {(-0.6,0.9) (0,0.5) (0.7,1) (0.5,1.5) (-0.2,1.4)};
\end{scope}
\node at (-0.4,2.5){\small $L-1$};
\node at (-1.5,3.5){\small $\ell$};
\node at (-3,1.15){\small $\om_{L}$};
\begin{scope}[shift={(-5.7,1.2)}]
\draw[->] (2,1.2)--(2.6,1.2);
\draw[->] (2.1,1.1)--(2.1,1.7);
\node at (2.65,1.3){\small $x$};
\node at (2.25,1.6){\small $y$};
\end{scope}
\node at (-3.5,1.5){\scriptsize Channel $1$};
\node at (0.5,1.5){\scriptsize Channel $2$};
\draw (-2.7,0.9)--(-2.7,1.1);
\draw (-0.3,0.9)--(-0.3,1.1);
\node at (-2.7,0.7){\small $-H$};
\node at (-0.3,0.7){\small $H$};
\end{tikzpicture}\qquad
\begin{tikzpicture}[scale=1.5]
\draw[fill=gray!30,draw=none](-2,1) rectangle (0.5,2);
\draw[fill=gray!30,draw=none](-3.5,1) rectangle (-2,2);
\draw[fill=gray!30,draw=none](-2,1.8) rectangle (-1,3);
\begin{scope}[scale=0.6,yshift=1.5cm,xshift=-3.2cm]
\draw [fill=white] plot [smooth cycle, tension=1] coordinates {(-0.6,0.9) (0,0.5) (0.7,1) (0.5,1.5) (-0.2,1.4)};
\end{scope}
\draw (-3.5,1)--(0.5,1); 
\draw (-3.5,2)--(-2,2)--(-2,3.2);
\draw (-1,3.2)--(-1,2)--(0.5,2); 
\draw[dashed] (-2,3.2)--(-2,3.4);
\draw[dashed] (-1,3.2)--(-1,3.4);
\draw[dashed] (1,1)--(0.5,1); 
\draw[dashed] (1,2)--(0.5,2);
\draw[dashed] (-4,2)--(-3.5,2);
\draw[dashed] (-4,1)--(-3.5,1);
\node at (-3,1.15){\small $\om_{\infty}$};
\node at (-3.5,1.5){\scriptsize Channel $1$};
\node at (0.5,1.5){\scriptsize Channel $2$};
\node at (-1.5,2.7){\scriptsize Channel $3$};
\phantom{\node at (-0.3,0.7){\small $H$};}
\end{tikzpicture}
\caption{Geometries of $\om_{L}$ (left) and $\om_{\infty}$ (right). \label{DomainOriginal2D}} 
\end{figure}

\noindent For $\ell>0$ and $L>1$, let $\om_L\subset\R^2$ be an open set which coincides with the domain
\begin{equation}\label{definitionWaveguide}
\{(x,y)\in\R\times (0;1)\cup (-\ell/2;\ell/2)\times [1;L)\}
\end{equation}
outside a bounded region independent of $\ell$, $L$. We assume that $\om_L$ is connected and has Lipschitz boundary $\partial\om_L$. We call channel 1 (resp. channel 2) the branch of $\om_L$ which coincides at infinity with $(-\infty;0)\times (0;1)$ (resp. with $(0;+\infty)\times (0;1)$) (see Figure \ref{DomainOriginal2D} left). We work in an academic geometry for the ease of presentation. In the conclusion (Section \ref{SectionConclusion}), we discuss possible extensions (see also Figure \ref{OtherGeom} right). We consider the problem with Neumann boundary condition
\begin{equation}\label{PbInitial}
\begin{array}{|rcll}
\Delta u + k^2 u & = & 0 & \mbox{ in }\om_L\\[3pt]
 \partial_nu  & = & 0  & \mbox{ on }\partial\om_L. 
\end{array}
\end{equation}
Here $\Delta$ is the $2\mrm{D}$ Laplace operator, $k$ corresponds to the wavenumber. Moreover, $n$ stands for the normal unit vector to $\partial\om_L$ directed to the exterior of $\om_L$. We take $k\in(0;\pi)$ so that
\[
w^{\pm}_{1}(x,y)=\cfrac{1}{\sqrt{2k}}\,e^{\mp i k x}\quad\mbox{ and }\quad  w^{\pm}_{2}(x,y)=\cfrac{1}{\sqrt{2k}}\,e^{\pm i k x}
\]
are the only propagating modes in channels $1$ and $2$. More precisely, for $i=1,2$, the planar mode $w^{+}_{i}$ (resp. $w^{-}_{i}$) is outgoing (resp. ingoing) in channel $i$. Introduce $\chi_l\in\mathscr{C}^{\infty}(\R^2)$ (resp. $\chi_r\in\mathscr{C}^{\infty}(\R^2)$) a cut-off function equal to one for $x\le -2H$ (resp. $x\ge 2H$) and to zero for $x\ge -H$ (resp. $x\le H$). Here $H$ is a parameter such that $\om_L$ coincides with $\R\times(0;1)$ for $|x|>H$. In order to describe the scattering process of the incident waves $w^{-}_{i}$ propagating in channel $i$, we introduce the following scattering solutions
\begin{equation}\label{DefScatteringCoeff}
\begin{array}{lcl}
u_{1}&=& \chi_l\,(w^{-}_{1}+s_{11}\,w^{+}_{1})+\chi_r\,s_{12}\,w^{+}_{2}+\tilde{u}_1\\[4pt]
u_{2}&=& \chi_l\,s_{21}\,w^{+}_{1}+\chi_r\,(w^{-}_{2}+s_{22}\,w^{+}_{2})+\tilde{u}_2,
\end{array}
\end{equation}
where $s_{ij}\in\Cplx$ and where $\tilde{u}_1$, $\tilde{u}_2$ decay exponentially as $O(e^{-\sqrt{\pi^2-k^2}|x|})$ for $x\to\pm\infty$. It is known (see e.g. \cite[Chap. 5, \S3.3, Thm. 3.5 p.160]{NaPl94}) that Problem (\ref{PbInitial}) admits solutions of the form (\ref{DefScatteringCoeff}). The complex constants $s_{ij}$, $i,j\in\{1,2\}$ in (\ref{DefScatteringCoeff}) are uniquely defined. Moreover the scattering matrix 
\begin{equation}\label{DefMatrixScattering}
\mathbb{S}:=\left(\begin{array}{cc}
s_{11} & s_{12}\\
s_{21} & s_{22}
\end{array}\right)\in\mathbb{C}^{2\times 2}
\end{equation}
is unitary ($\mathbb{S}\,\overline{\mathbb{S}}^{\top}=\mrm{Id}^{2\times2}$) and we have $s_{21}=s_{12}$, \textit{i.e.} $\mathbb{S}$ is symmetric (even for a non symmetric geometry). For the proof of the two latter properties, see Lemma \ref{LemmaUnitary} in Annex. Classically, $s_{11}$, $s_{22}$ are called \textit{reflection coefficients} while $s_{12}=s_{21}$ are \textit{transmission coefficients}.
In this section, for a given wavenumber $k\in(0;\pi)$, 
we explain how to construct geometries such that $s_{12}=0$.

\subsection{Asymptotic analysis of the scattering coefficients}\label{paragraphAsymptoAnalysis}

The scattering coefficients defined in (\ref{DefScatteringCoeff}), (\ref{DefMatrixScattering}) depend on the geometry, in particular on $L$. In this paragraph, we compute an asymptotic expansion of $s_{11}=s_{11}(L)$, $s_{12}=s_{21}(L)$, $s_{22}=s_{22}(L)$ as $L\to+\infty$. In the analysis, the properties of Problem (\ref{PbInitial}) set in the geometry $\om_{\infty}$ (see (\ref{DomainOriginal2D}) right) obtained from $\om_L$ making $L\to+\infty$, play a key role. We call channel 3 the branch of $\om_{\infty}$ which coincides at infinity with $(-\ell/2;\ell/2)\times(0;+\infty)$ (see Figure \ref{DomainOriginal2D} right). Channels 1 and 2 of $\om_{\infty}$ are defined as for $\om_L$. We assume that $\ell\in(0;\pi/k)$ so that  
\[
w^{\pm}_{3}(x,y)=\cfrac{1}{\sqrt{2k\ell}}\,e^{\pm i k y}
\]
are the only propagating modes in channel 3. For $\ell>\pi/k$, the analysis below must be adapted (see \cite[Section 5]{ChNPSu} for more details). In $\om_{\infty}$, there are the solutions 
\begin{equation}\label{defMatrixScaLim}
\begin{array}{lcl}
u_{1}^{\infty}&=& \chi_l(w^-_{1}+s^{\infty}_{11}\,w^+_{1})+\chi_r\,s^{\infty}_{12}\,w^+_{2}+\chi_t\,s^{\infty}_{13}\,w^+_{3}+\tilde{u}_{1}^{\infty}\\[4pt]
u_{2}^{\infty}&=&\chi_l\,s^{\infty}_{21}\,w^+_{1}+\chi_r\,(w^-_{2}+s^{\infty}_{22}\,w^+_{2})+\chi_t\,s^{\infty}_{23}\,w^+_{3}+\tilde{u}_{2}^{\infty}\\[4pt]
u_{3}^{\infty}&=&\chi_l\,s^{\infty}_{31}\,w^+_{1}+\chi_r\,s^{\infty}_{32}\,w^+_{2}+\chi_t\,(w^-_{3}+s^{\infty}_{33}\,w^+_{3})+\tilde{u}_{3}^{\infty},
\end{array}
\end{equation}
where $\tilde{u}_{1}^{\infty}$, $\tilde{u}_{2}^{\infty}$, $\tilde{u}_{3}^{\infty}$ decay exponentially at infinity. Here $\chi_t\in\mathscr{C}^{\infty}(\R^2)$ is a cut-off function equal to one for $y\ge 1+2\tau$ and to zero for $y\le 1+\tau$, where $\tau\ge0$ is a given constant such that $\om_{\infty}$ coincides with $(-\ell/2;\ell/2)\times\R$ for $y\ge\tau$. The scattering matrix
\begin{equation}\label{UnboundedScatteringMatrix}
\mathbb{S}^{\infty}:=\left(\begin{array}{ccc}
s^{\infty}_{11} & s^{\infty}_{12} & s^{\infty}_{13}\\
s^{\infty}_{21} & s^{\infty}_{22} & s^{\infty}_{23} \\
s^{\infty}_{31} & s^{\infty}_{32} & s^{\infty}_{33}
\end{array}\right)\in\mathbb{C}^{3\times 3}
\end{equation}
is uniquely defined, unitary ($\mathbb{S}^{\infty}\overline{\mathbb{S}^{\infty}}^{\top}=\mrm{Id}^{3\times3}$) and symmetric (work as in the proof of Lemma \ref{LemmaUnitary} in Annex to show the two latter properties). For $u_{1}$, $u_{2}$, following for example \cite[Chap. 5, \S5.6]{MaNP00}, we make the ansatzs
\begin{equation}\label{DefAnsatzs}
\begin{array}{lcl}
u_{1} &=& u_{1}^{\infty}+a_{1}(L)\,u_{3}^{\infty}+\dots \\
u_{2} &=& u_{2}^{\infty}+a_{2}(L)\,u_{3}^{\infty}+\dots \  
\end{array}
\end{equation}
where $a_{1}(L)$, $a_{2}(L)$ are some gauge functions to determine and where the dots correspond to small remainders. On $(-\ell/2;\ell/2)\times\{L\}$, the conditions $\partial_nu_{1}=0$, $\partial_nu_{2}=0$ lead to choose $a_{1}(L)$, $a_{2}(L)$ such that 
\[
s^{\infty}_{13}\,e^{ikL}+a_1(L)\,(-e^{-ikL}+s^{\infty}_{33}\,e^{ikL})=0\qquad\Leftrightarrow\qquad a_1(L)=\dsp\cfrac{s^{\infty}_{13}}{e^{-2ikL}-s^{\infty}_{33}}\phantom{\ .}
\]
\[
s^{\infty}_{23}\,e^{ikL}+a_2(L)\,(-e^{-ikL}+s^{\infty}_{33}\,e^{ikL})=0\qquad\Leftrightarrow\qquad a_2(L)=\dsp\cfrac{s^{\infty}_{23}}{e^{-2ikL}-s^{\infty}_{33}}\ .
\]
We shall consider ansatzs (\ref{DefAnsatzs}) when $|s^{\infty}_{33}|\ne1$ (when $|s^{\infty}_{33}|=1$, see \S\ref{paragraphDegenerated} in Annex). In this case, the gauge functions $a_1(L)$, $a_2(L)$ are well-defined for all $L>1$. Then we can prove that $\mathbb{S}=\mathbb{S}^{\mrm{asy}}(L)+\dots$, with
\begin{equation}\label{defMatriceLim}
\mathbb{S}^{\mrm{asy}}(L):=(s^{\mrm{asy}}_{ij})_{1\le i,j\le 2}=\left(\begin{array}{cc}
s^{\infty}_{11} & s^{\infty}_{12}\\
s^{\infty}_{21} & s^{\infty}_{22}
\end{array}\right)+\left(\begin{array}{c}a_1(L) \\ a_2(L)\end{array}\right)  \Big(\ s^{\infty}_{31}\quad  s^{\infty}_{32}\ \Big).
\end{equation}
In other words, we get
\begin{equation}\label{defCoeffLim}
\begin{array}{lcl}
s_{11} = s^{\infty}_{11}+\cfrac{s^{\infty}_{13}\,s^{\infty}_{31}}{e^{-2ikL}-s^{\infty}_{33}} +\dots&\qquad\ \,  &\qquad s_{12} = s^{\infty}_{12}+\cfrac{s^{\infty}_{13}\,s^{\infty}_{32}}{e^{-2ikL}-s^{\infty}_{33}} +\dots \\[12pt]
s_{21} = s^{\infty}_{21}+\cfrac{s^{\infty}_{23}\,s^{\infty}_{31}}{e^{-2ikL}-s^{\infty}_{33}} +\dots\ 
&\qquad \mbox{and} &\qquad s_{22} = s^{\infty}_{22}+\cfrac{s^{\infty}_{23}\,s^{\infty}_{32}}{e^{-2ikL}-s^{\infty}_{33}} +\dots.
\end{array}
\end{equation}
Here the dots stand for exponentially small terms. More precisely, we can establish (work as in \cite[Chap. 5, \S5.6]{MaNP00}, \cite[Prop. 8.1]{ChNPSu}) an error estimate of the form $\|\mathbb{S}-\mathbb{S}^{\mrm{asy}}(L)\| \le C\,e^{-\beta_{\ell} L}$ where $C$ is a constant independent of $L>1$ and $\beta_{\ell}:=\sqrt{(\pi/\ell)^2-k^2}$. Since $\mathbb{S}^{\infty}$ is symmetric, we see from (\ref{defCoeffLim}) that $\mathbb{S}^{\mrm{asy}}(L)$ is also symmetric. Moreover, after some calculations reproduced in the proof of Lemma \ref{lemmaUnitary} in Annex and based on the fact that $\mathbb{S}^{\infty}$ is unitary, one can check that for all $L>1$, the matrix $\mathbb{S}^{\mrm{asy}}(L)$ is unitary. Denote $\mathscr{C}:=\{z\in\Cplx\,|\,|z|=1\}$ the unit circle. As $L$ tends to $+\infty$, the coefficients $s^{\mrm{asy}}_{11}$, $s^{\mrm{asy}}_{12}=s^{\mrm{asy}}_{21}$ and $s^{\mrm{asy}}_{22}$ run respectively on the sets
\begin{equation}\label{setSPlusMoins}
\gamma_{11}:=\{s^{\infty}_{11}+\dsp\frac{s^{\infty}_{13}\,s^{\infty}_{31}}{z-s^{\infty}_{33}} \,|\,z\in\mathscr{C}\},\ \gamma_{12}:=\{s^{\infty}_{12}+\dsp\frac{s^{\infty}_{13}\,s^{\infty}_{32}}{z-s^{\infty}_{33}} \,|\,z\in\mathscr{C}\},\ \gamma_{22}:=\{s^{\infty}_{22}+\dsp\frac{s^{\infty}_{23}\,s^{\infty}_{32}}{z-s^{\infty}_{33}} \,|\,z\in\mathscr{C}\}.
\end{equation}
Using classical results concerning the M\"{o}bius transform (see e.g.  \cite[Chap. 5]{Henr74}), one finds that $\gamma_{11}$, $\gamma_{12}$, $\gamma_{22}$ are circles centered respectively at 
\begin{equation}\label{eqnCenters}
z_{11}:=s^{\infty}_{11}+\dsp\frac{s^{\infty}_{13}\,\overline{s^{\infty}_{33}}\,s^{\infty}_{31}}{1-|s^{\infty}_{33}|^{2}},\qquad z_{12}:=s^{\infty}_{12}+\dsp\frac{s^{\infty}_{13}\,\overline{s^{\infty}_{33}}\,s^{\infty}_{32}}{1-|s^{\infty}_{33}|^{2}},\qquad
z_{22}:=s^{\infty}_{22}+\dsp\frac{s^{\infty}_{23}\,\overline{s^{\infty}_{33}}\,s^{\infty}_{32}}{1-|s^{\infty}_{33}|^{2}}
\end{equation}
of radii 
\begin{equation}\label{eqnRadius}
\rho_{11}:=\dsp\frac{|s^{\infty}_{13}\,s^{\infty}_{31}|}{1-|s^{\infty}_{33}|^{2}},\qquad \rho_{12}:=\dsp\frac{|s^{\infty}_{13}\,s^{\infty}_{32}|}{1-|s^{\infty}_{33}|^{2}},\qquad \rho_{22}:=\dsp\frac{|s^{\infty}_{23}\,s^{\infty}_{32}|}{1-|s^{\infty}_{33}|^{2}}\ .
\end{equation}
In the following, we assume that the coefficients $s_{12}^{\infty}$, $s_{13}^{\infty}$, $s_{23}^{\infty}$ in (\ref{defMatrixScaLim}) are such that $s_{12}^{\infty}\,s_{13}^{\infty}\,s_{23}^{\infty}\ne0$. This assumption is needed so that couplings exist between the three channels of $\om_{\infty}$. We refer the reader to \S\ref{paragraphPerfectRef} for an example of situation where numerically this assumption is satisfied. When $s_{12}^{\infty}\,s_{13}^{\infty}\,s_{23}^{\infty}\ne0$, as a consequence of the unitary structure of $\mathbb{S}^{\infty}$, we have $|s_{33}^{\infty}|\ne1$ and the three radii $\rho_{11}$, $\rho_{12}$, $\rho_{22}$ are located in $(0;1)$.  
\begin{proposition}\label{PropositionGoesThroughZero}
Assume that the coefficients $s_{12}^{\infty}$, $s_{13}^{\infty}$, $s_{23}^{\infty}$ in (\ref{defMatrixScaLim}) satisfy $s_{12}^{\infty}\,s_{13}^{\infty}\,s_{23}^{\infty}\ne0$. Then the circle $\gamma_{12}$ (the asymptotic orbit for the transmission coefficient) passes through zero. 
\end{proposition}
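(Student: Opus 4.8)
The plan is to view $\gamma_{12}$ as the image of $\mathscr{C}$ under the M\"{o}bius transform $f(z):=s^{\infty}_{12}+s^{\infty}_{13}s^{\infty}_{32}/(z-s^{\infty}_{33})$ and to reduce the assertion ``$0\in\gamma_{12}$'' to a single modulus identity. By the symmetry of $\mathbb{S}^{\infty}$ we have $s^{\infty}_{32}=s^{\infty}_{23}$ and $s^{\infty}_{21}=s^{\infty}_{12}$, and the standing assumption $s^{\infty}_{12}\,s^{\infty}_{13}\,s^{\infty}_{23}\ne0$ guarantees that the numerator $s^{\infty}_{13}\,s^{\infty}_{32}$ of $f$ is nonzero. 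Since $|s^{\infty}_{33}|<1$ (so that $1-|s^{\infty}_{33}|^{2}>0$, as noted before the statement), the pole $s^{\infty}_{33}$ of $f$ lies strictly inside the unit disk; hence $f$ restricts to a bijection of $\mathscr{C}$ onto the circle $\gamma_{12}$, and it suffices to show that the $f$-preimage of $0$ lies on $\mathscr{C}$.

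First I would solve $f(z)=0$. Since $s^{\infty}_{12}\ne0$, this equation has the unique solution $z_{0}:=s^{\infty}_{33}-s^{\infty}_{13}\,s^{\infty}_{32}/s^{\infty}_{12}$, and therefore $0\in\gamma_{12}$ holds precisely when $|z_{0}|=1$, i.e. when
\[
|s^{\infty}_{12}\,s^{\infty}_{33}-s^{\infty}_{13}\,s^{\infty}_{32}|=|s^{\infty}_{12}|.
\]
This identity is the crux of the argument, and it is exactly the point where the unitarity of the augmented scattering matrix enters.

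To establish it I would use the following elementary fact about any unitary matrix $U$: combining $U^{-1}=\overline{U}^{\top}$ with $U^{-1}=\det(U)^{-1}\,\mrm{adj}(U)$ gives $\mrm{adj}(U)=\det(U)\,\overline{U}^{\top}$, so each cofactor satisfies $C_{pq}=\det(U)\,\overline{U_{pq}}$ and hence $|C_{pq}|=|U_{pq}|$, because $|\det(U)|=1$. Applying this to $U=\mathbb{S}^{\infty}$ with $(p,q)=(2,1)$, the relevant cofactor is, up to sign, the $2\times2$ minor obtained by deleting row $2$ and column $1$, namely $s^{\infty}_{12}\,s^{\infty}_{33}-s^{\infty}_{13}\,s^{\infty}_{32}$, while the complementary entry is $U_{21}=s^{\infty}_{21}=s^{\infty}_{12}$. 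The relation $|C_{21}|=|U_{21}|$ thus reads $|s^{\infty}_{12}\,s^{\infty}_{33}-s^{\infty}_{13}\,s^{\infty}_{32}|=|s^{\infty}_{12}|$, which is precisely the desired identity. Consequently $|z_{0}|=1$, the preimage $z_{0}$ lies on $\mathscr{C}$, and $0=f(z_{0})\in\gamma_{12}$.

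The only genuine difficulty is spotting this minor-versus-entry identity for unitary matrices; once it is in hand the proof is immediate. A direct alternative---writing out the orthonormality relations among the rows and columns of $\mathbb{S}^{\infty}$ and checking $|z_{12}|=\rho_{12}$ by hand---would also work, but it is messier, since it drags in the diagonal coefficients $s^{\infty}_{11}$, $s^{\infty}_{22}$ that the cofactor identity neatly bypasses.
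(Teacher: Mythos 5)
Your proof is correct, and its first half coincides with the paper's: the authors also solve $f(z)=0$ to obtain the unique preimage $z_{0}=s^{\infty}_{33}-s^{\infty}_{13}s^{\infty}_{32}/s^{\infty}_{12}$ and reduce the claim to the modulus identity $|s^{\infty}_{12}s^{\infty}_{33}-s^{\infty}_{13}s^{\infty}_{32}|=|s^{\infty}_{12}|$, i.e.\ their condition (\ref{ConditionToVerifyPart1}). Where you genuinely diverge is in how this identity is established. The paper expands $C:=|s^{\infty}_{12}s^{\infty}_{33}-s^{\infty}_{13}s^{\infty}_{32}|^{2}-|s^{\infty}_{12}|^{2}$, eliminates the cross term $2\,\Re e\,(s^{\infty}_{13}\,\overline{s^{\infty}_{33}}\,s^{\infty}_{32}\overline{s^{\infty}_{12}})$ by taking the squared modulus of the row-orthogonality relation $s^{\infty}_{11}\overline{s^{\infty}_{31}}+s^{\infty}_{12}\overline{s^{\infty}_{32}}+s^{\infty}_{13}\overline{s^{\infty}_{33}}=0$, and then uses the normalization of rows and columns of $\mathbb{S}^{\infty}$ to get $C=0$; like you, they need $|s^{\infty}_{21}|=|s^{\infty}_{12}|$ (symmetry) at the final step. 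You instead invoke the adjugate identity $\mrm{adj}(U)=\det(U)\,\overline{U}^{\top}$ for unitary $U$, hence $|C_{pq}|=|U_{pq}|$ for every cofactor, and read off the identity with $(p,q)=(2,1)$. This is a genuinely different key lemma, and it buys something: it exhibits the identity as an instance of a general structural fact about unitary matrices rather than a fortuitous cancellation, and it would scale directly to configurations with more channels (the $N$-channel geometries evoked in Section \ref{SectionConclusion}, where the analogous statements involve higher-order minors), whereas the paper's computation is more elementary and self-contained, using nothing beyond the orthonormality relations. Two cosmetic remarks: the matrix you apply the lemma to, $\mathbb{S}^{\infty}$ of (\ref{UnboundedScatteringMatrix}), is the scattering matrix of the limit geometry $\om_{\infty}$, not the ``augmented scattering matrix'' (in this paper that name is reserved for $\mathcal{S}$ of Section \ref{SectionTrappedModes}); and the bijectivity of $f$ on $\mathscr{C}$ is not really needed --- it suffices that $\gamma_{12}=f(\mathscr{C})$ and that the unique zero of $f$ lies on $\mathscr{C}$, which is exactly what you verify.
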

\begin{proof}
We have 
\[
s^{\infty}_{12}+\dsp\frac{s^{\infty}_{13}\,s^{\infty}_{32}}{z-s^{\infty}_{33}}=0\qquad\Leftrightarrow\qquad z=s^{\infty}_{33}-\dsp\frac{s^{\infty}_{13}\,s^{\infty}_{32}}{s^{\infty}_{12}}.
\]
Therefore, it is sufficient to prove that 
\begin{equation}\label{ConditionToVerifyPart1}
|s^{\infty}_{33}-\dsp\frac{s^{\infty}_{13}\,s^{\infty}_{32}}{s^{\infty}_{12}}|=1.
\end{equation}
Set $C=|s^{\infty}_{33}|^2|s^{\infty}_{12}|^2+
|s^{\infty}_{13}|^2|s^{\infty}_{32}|^2
-2\,\Re e\,(s^{\infty}_{13}\,\overline{s^{\infty}_{33}}\,s^{\infty}_{32}\overline{s^{\infty}_{12}})
-|s^{\infty}_{12}|^2$. A direct computation shows that (\ref{ConditionToVerifyPart1}) is true if and only if $C=0$. Since $\mathbb{S}^{\infty}$ is unitary, we have 
\[
s^{\infty}_{11}\overline{s^{\infty}_{31}}+
s^{\infty}_{12}\overline{s^{\infty}_{32}}+
s^{\infty}_{13}\overline{s^{\infty}_{33}}=0\qquad\Leftrightarrow \qquad 
s^{\infty}_{12}\overline{s^{\infty}_{32}}+
s^{\infty}_{13}\overline{s^{\infty}_{33}}=-s^{\infty}_{11}\overline{s^{\infty}_{31}}.
\]
Squaring the second equality above, we deduce that 
\[
2\,\Re e\,(s^{\infty}_{13}\,\overline{s^{\infty}_{33}}\,s^{\infty}_{32}\overline{s^{\infty}_{12}})=|s^{\infty}_{11}|^2|s^{\infty}_{31}|^2-|s^{\infty}_{12}|^2|s^{\infty}_{32}|^2-|s^{\infty}_{13}|^2|s^{\infty}_{33}|^2.
\]
Using again the fact that $\mathbb{S}^{\infty}$ is unitary, we can write
\[
\begin{array}{lcl}
C  &= & |s^{\infty}_{33}|^2(|s^{\infty}_{12}|^2+|s^{\infty}_{13}|^2)+|s^{\infty}_{32}|^2(|s^{\infty}_{12}|^2+|s^{\infty}_{13}|^2)-|s^{\infty}_{11}|^2|s^{\infty}_{31}|^2-|s^{\infty}_{12}|^2\\[4pt]
&= & |s^{\infty}_{33}|^2(1-|s^{\infty}_{11}|^2)+|s^{\infty}_{32}|^2(1-|s^{\infty}_{11}|^2)-|s^{\infty}_{11}|^2|s^{\infty}_{31}|^2-|s^{\infty}_{12}|^2\\[4pt]
&= & 1-|s^{\infty}_{31}|^2-|s^{\infty}_{11}|^2-|s^{\infty}_{12}|^2\ =\ 0.
\end{array}
\]
This gives the desired result.
\end{proof}

\noindent Proposition \ref{PropositionGoesThroughZero} together with the error estimate $\|\mathbb{S}-\mathbb{S}^{\mrm{asy}}(L)\| \le C\,e^{-\beta_{\ell} L}$ show that the curve $L\mapsto s_{12}(L)$ for the transmission coefficient goes as close as we wish to zero as $L\to+\infty$. In the next paragraph, we prove that due  to the unitary structure of $\mathbb{S}$, the curve $L\mapsto s_{12}(L)$ passes exactly through zero. 

\subsection{Zero transmission}

Now we state and prove the main result of this section.
\begin{theorem}\label{MainThmPart1}
Assume that the coefficients $s_{12}^{\infty}$, $s_{13}^{\infty}$, $s_{23}^{\infty}$ in (\ref{defMatrixScaLim}) satisfy $s_{12}^{\infty}\,s_{13}^{\infty}\,s_{23}^{\infty}\ne0$. Then the complex curve $L\mapsto s_{12}(L)$ for the transmission coefficient passes through zero an infinite number of times as $L\to+\infty$.
\end{theorem}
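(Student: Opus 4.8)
The plan is to upgrade the \emph{asymptotic} statement of Proposition \ref{PropositionGoesThroughZero} to an \emph{exact} one by feeding the exponential error estimate into an algebraic consequence of the unitarity and symmetry of $\mathbb{S}$. The starting point is the identity, valid for \emph{any} symmetric unitary $2\times2$ matrix $\mathbb{S}$,
\[
s_{12}^{\,2}=-|s_{12}|^{2}\,\det\mathbb{S}.
\]
Indeed, conjugating the off-diagonal relation $s_{11}\overline{s_{12}}+s_{12}\overline{s_{22}}=0$ coming from $\mathbb{S}\,\overline{\mathbb{S}}^{\top}=\mrm{Id}$ (here $s_{21}=s_{12}$), multiplying by $s_{12}$, and using $|s_{11}|^{2}+|s_{12}|^{2}=1$ together with $s_{11}s_{22}=\det\mathbb{S}+s_{12}^{2}$, a short computation gives the formula (which also holds when $s_{11}=0$). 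Since $\mathbb{S}$ is unitary, $\det\mathbb{S}(L)$ lies on $\mathscr{C}$ and depends continuously on $L$, so one may fix a continuous branch $L\mapsto\sqrt{\det\mathbb{S}(L)}$ and set $g(L):=s_{12}(L)/\sqrt{\det\mathbb{S}(L)}$. The identity then reads $g(L)^{2}=-|s_{12}(L)|^{2}\le0$, so that $g(L)=i\,h(L)$ with $h$ real and continuous and $|h(L)|=|s_{12}(L)|$. In particular $s_{12}(L)=0$ if and only if $h(L)=0$, and it suffices to prove that the real continuous function $h$ vanishes infinitely often.

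Next I would produce the zeros by detecting sign changes of $h$ across the asymptotic zeros of $s_{12}^{\mrm{asy}}$. By Proposition \ref{PropositionGoesThroughZero} there is $z^{\star}\in\mathscr{C}$ with $s^{\infty}_{12}+s^{\infty}_{13}s^{\infty}_{32}/(z^{\star}-s^{\infty}_{33})=0$; writing $z^{\star}=e^{-2ikL_{0}^{\star}}$ and $L_{n}^{\star}:=L_{0}^{\star}+n\pi/k$ yields a sequence $L_{n}^{\star}\to+\infty$ at which $s_{12}^{\mrm{asy}}$ vanishes. Because $s^{\infty}_{13}s^{\infty}_{32}\ne0$ and $z^{\star}\ne s^{\infty}_{33}$, the derivative of $s_{12}^{\mrm{asy}}$ at $L_{n}^{\star}$ is a nonzero constant $v$ independent of $n$ (the map is $\pi/k$-periodic), so the crossing is transversal: for small fixed $\eta>0$ one has $s_{12}^{\mrm{asy}}(L_{n}^{\star}\pm\eta)=\pm\eta\,v+O(\eta^{2})$, two values whose arguments differ by $\pi$. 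Since near $L_{n}^{\star}$ the quantity $|s_{12}|$ is small, hence $|s_{11}|=|s_{22}|$ is close to $1$ and $s_{11},s_{22}$ stay away from $0$, the map $\det\mathbb{S}$ is smooth there (its asymptotic entries have denominators bounded away from $0$ because $|s^{\infty}_{33}|\ne1$), so $\sqrt{\det\mathbb{S}(L)}$ rotates by only $O(\eta)$ over $[L_{n}^{\star}-\eta,L_{n}^{\star}+\eta]$. Combining these facts, $g(L_{n}^{\star}+\eta)/g(L_{n}^{\star}-\eta)\simeq-1$, and as $g=i\,h$ with $|h(L_{n}^{\star}\pm\eta)|\simeq\eta|v|>0$ this forces $h(L_{n}^{\star}-\eta)$ and $h(L_{n}^{\star}+\eta)$ to have opposite signs. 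The intermediate value theorem then gives a zero of $h$, hence of $s_{12}$, in $(L_{n}^{\star}-\eta,L_{n}^{\star}+\eta)$.

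The main obstacle is quantitative rather than conceptual: I must ensure that the exponentially small discrepancy $\|\mathbb{S}-\mathbb{S}^{\mrm{asy}}(L)\|\le C\,e^{-\beta_{\ell}L}$ does not spoil the sign change, i.e. that at the endpoints $L_{n}^{\star}\pm\eta$ the error is negligible compared with $|s_{12}^{\mrm{asy}}(L_{n}^{\star}\pm\eta)|\simeq\eta|v|$, which is what legitimizes replacing $s_{12}$ by $s_{12}^{\mrm{asy}}$ when comparing arguments. As $\eta|v|$ is a fixed positive number while $C\,e^{-\beta_{\ell}L_{n}^{\star}}\to0$, this holds for all $n$ large enough, and it is precisely here that one passes from the asymptotic conclusion (the curve comes arbitrarily close to $0$) to the exact one (the curve meets $0$). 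The remainder is bookkeeping: fix $\eta$ once and for all, check that $v$ and the bound on the variation of $\sqrt{\det\mathbb{S}}$ are uniform in $n$ (guaranteed by the $\pi/k$-periodicity and smoothness of $\mathbb{S}^{\mrm{asy}}$), and discard the finitely many small $n$. This yields a zero of $s_{12}$ in each interval $(L_{n}^{\star}-\eta,L_{n}^{\star}+\eta)$ for $n\ge N$, hence infinitely many as $L\to+\infty$, which is the assertion of the theorem.
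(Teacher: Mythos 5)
Your proof is correct, and it takes a genuinely different route from the paper's, although both share the same skeleton: Proposition \ref{PropositionGoesThroughZero} gives the asymptotic zeros $L_n^{\star}=L_0^{\star}+n\pi/k$, the estimate $\|\mathbb{S}-\mathbb{S}^{\mrm{asy}}(L)\|\le Ce^{-\beta_{\ell}L}$ transfers the information to $s_{12}$, and the unitarity plus symmetry of $\mathbb{S}$ provides the decisive rigidity. The paper proceeds by contradiction: if $s_{12}$ avoided zero near $L_n^{\star}$, its phase would have to sweep a quarter turn across the shrinking interval $I_n$, and the relation $s_{11}\overline{s_{12}}+s_{12}\overline{s_{22}}=0$ would force $s_{11}$ to take the two far-apart values $\pm i e^{2i\eta}\,\overline{s_{22}}$ at two points of $I_n$, contradicting the fact that $s_{11},s_{22}$ are nearly frozen at limiting values of modulus one. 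You instead argue directly: your identity $s_{12}^{2}=-|s_{12}|^{2}\det\mathbb{S}$ (a repackaging of the same off-diagonal unitarity relation; your computation is valid, including when $s_{11}=0$) makes $g=s_{12}/\sqrt{\det\mathbb{S}}=ih$ purely imaginary with $h$ real and continuous, collapsing the two-dimensional problem of a complex curve hitting a point to a one-dimensional sign-change problem settled by the intermediate value theorem, once the transversality $v\neq0$ (which you correctly deduce from $s_{13}^{\infty}s_{32}^{\infty}\neq0$ and $|s_{33}^{\infty}|<1$) and the slow rotation of $\sqrt{\det\mathbb{S}}$ are in hand. What each buys: your route is constructive rather than by contradiction, isolates a reusable structural fact about symmetric unitary matrices, and makes transparent why unitarity stabilizes the zero crossing; the paper's route needs only the continuity of $s_{12}$ (you need continuity of the full matrix $L\mapsto\mathbb{S}(L)$ for $\det\mathbb{S}$ and the square-root branch, a standard fact but a slightly stronger input) and localizes the zeros in shrinking intervals $I_n$ rather than your fixed-width windows $(L_n^{\star}-\eta,L_n^{\star}+\eta)$. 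One phrase in your argument should be tightened: $\mathbb{S}(L)$ is only known to be continuous in $L$, so rather than calling $\det\mathbb{S}$ ``smooth'' you should bound its oscillation on $[L_n^{\star}-\eta,L_n^{\star}+\eta]$ by that of $\det\mathbb{S}^{\mrm{asy}}$ (uniformly $O(\eta)$ by $\pi/k$-periodicity, the denominators $e^{-2ikL}-s_{33}^{\infty}$ staying away from zero) plus the $O(e^{-\beta_{\ell}L_n^{\star}})$ discrepancy --- which is precisely what your parenthetical remark suggests and is all the argument needs.
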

\begin{remark}
In Section \ref{SectionNumExpe}, \S\ref{paragraphPerfectRef} below, we provide an example of situation where numerically the assumption of Theorem \ref{MainThmPart1} is satisfied. 
\end{remark}
\begin{proof}
\begin{figure}[!ht]
\centering
\begin{tikzpicture}[scale=3]
\draw[draw=none,fill=red!30] (0,0)--(8.13:1)--(8.13:1)arc(8.13:98.13:1)--cycle;
\draw[draw=none,fill=red!30] (0,0)--(188.13:1)--(188.13:1)arc(188.13:278.13:1)--cycle;
\draw[blue,line width=0.3mm] (4/5,-3/5) circle (5/5);
\draw[green!60!black,line width=0.3mm] (4/5,-3/5+0.1) ellipse (1.1 and 0.9);
\draw[->] (-1.2,0) -- (1.2,0);
\draw[->] (0,-1.2) -- (0,1.2);
\draw[red,line width=0.3mm] (0,0)--(8.13:1);
\draw[red,line width=0.3mm] (0,0)--(98.13:1);
\draw[red,line width=0.3mm] (0,0)--(188.13:1);
\draw[red,line width=0.3mm] (0,0)--(278.13:1);
\draw[domain=-0.8:0.8,smooth,variable=\x] plot ({\x},{4*\x/3.});
\node[blue] at (1.4,-1.55){$\gamma_{12}$};
\node[magenta] at (-0.26,0.17){$s_{12}(a_n)$};
\node[magenta] at (-0.3,-0.15){$s_{12}(b_n)$};
\node[green!60!black] at (2.2,-0.2){$L\mapsto s_{12}(L)$};
\node at (-0,0.8){$Q_{1}$};
\node at (0,-0.8){$Q_{2}$};
\fill[magenta] (-0.015,0.11) circle (0.02);
\fill[magenta] (-0.13,-0.02) circle (0.02);
\node at (1.4,1){$\{\rho\,e^{i\eta}\in\Cplx,\ \rho\in\R\}$};
\end{tikzpicture}
\caption{Notation used in the proof of Theorem \ref{MainThmPart1}.\label{PictureProof}}
\end{figure}
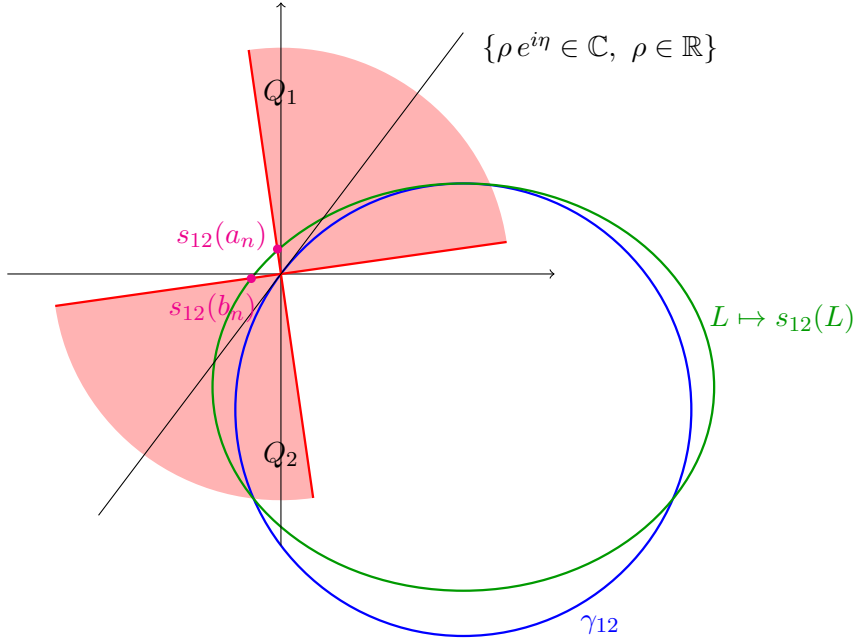

According to Proposition \ref{PropositionGoesThroughZero}, we know that there is $L_{\star}>0$ such that $s_{12}^{\mrm{asy}}(L_{\star})=0$. 
Since $L\mapsto s_{12}^{\mrm{asy}}(L)$ is $\pi/k$-periodic (see (\ref{defMatriceLim}), (\ref{defCoeffLim})), we deduce that $s_{12}^{\mrm{asy}}(L_{\star}+n\pi/k)=0$ for all $n\in\N:=\{0,1,\dots\}$. Define the interval
\[
I_n := (\,L_{\star}+\cfrac{n\pi}{k}-\cfrac{1}{n+1}\ ;\ L_{\star}+\cfrac{n\pi}{k}+\cfrac{1}{n+1}\,)\ .
\] 
Since the circle $\gamma_{12}$ passes through zero, there is $\eta\in(-\pi/2;\pi/2]$ such that $\gamma_{12}$ is tangent to the line $\{\rho\,e^{i\eta}\in\Cplx,\ \rho\in\R\}$. Define the quadrants 
\[
\begin{array}{ll}
Q_1:=\{\rho\,e^{i\theta}\in\Cplx\,|\,\rho>0,\ \eta-\pi/4<\theta<\eta+\pi/4\}\\[2pt]
Q_2:=\{\rho\,e^{i\theta}\in\Cplx\,|\,\rho<0,\ \eta-\pi/4<\theta<\eta+\pi/4\},
\end{array}
\]
see Figure \ref{PictureProof}. On $I_n$, the curve $L\mapsto s_{12}^{\mrm{asy}}(L)$ meets both quadrants $Q_1$ and $Q_2$. On the other hand, we have $|s_{12}(L)-s_{12}^{\mrm{asy}}(L)|\le C\,e^{-\beta_{\ell} L}$. As a consequence, there is $N\in\N$ such that for all $n\ge N$, the maps  $L\mapsto s_{12}(L)$ intersects both $Q_1$ and $Q_2$ on $I_n$.\\
\newline
Assume by contradiction that for all $n\ge N$, $L\mapsto s_{12}(L)$ does not pass through zero on $I_n$. Let us first describe the idea that we will use before making it rigorous. Since $\mathbb{S}$ is unitary, there holds
\begin{equation}\label{RelationUnitary}
s_{11}(L)\,\overline{s_{12}(L)}+s_{12}(L)\,\overline{s_{22}(L)}=0.
\end{equation}
As a consequence, we can write $s_{12}(L)/\overline{s_{12}(L)}=-s_{11}(L)/\overline{s_{22}(L)}$ for all $L\in I_n$. But if $L\mapsto s_{12}(L)$ does not pass through zero on $I_n$, the point $s_{12}(L)/\overline{s_{12}(L)}=e^{2i\mrm{arg}(s_{12}(L))}$ must run rapidly on the unit circle for $L\in I_n$ as $n\to+\infty$. On the other hand, $s_{11}(L)/\overline{s_{22}(L)}$ tends to a constant on $I_n$ as $n\to+\infty$. This way we obtain a contradiction. We emphasize that the unitary structure of $\mathbb{S}$ is the key ingredient of the proof.\\
\newline
Now we make this discussion rigorous. If  $L\mapsto s_{12}(L)$ does not vanish on $I_n$, since $L\mapsto s_{12}(L)$ is continuous and intersects both $Q_1$, $Q_2$ on $I_n$, we deduce that for all $n\ge N$, there are $a_n,\,b_n\in I_n$ such that $s_{12}(a_n)=t_n\,e^{i(\eta-\pi/4)}$ and $s_{12}(b_n)=\tilde{t}_n\,e^{i(\eta+\pi/4)}$, with $t_n$, $\tilde{t}_n\in\R\setminus\{0\}$. Taking successively $L=a_n$, $L=b_n$ in (\ref{RelationUnitary}), we obtain 
\begin{equation}\label{RelationContradicton}
s_{11}(a_n)=-ie^{2i\eta}\overline{s_{22}(a_n)}\qquad\mbox{ and }\qquad s_{11}(b_n)=ie^{2i\eta}\overline{s_{22}(b_n)}. 
\end{equation}
Pick $\eps$ small enough so that the open balls\footnote{For $z_0\in\Cplx$ (resp. $z_0\in\R^2$), $B(z_0,r)$ denotes the open ball of $\Cplx$ (resp. $\R^2$) of radius $r>0$ centered at $z_0$.} $B(ie^{2i\eta}\overline{s_{22}^{\mrm{asy}}(L_{\star}}),\eps)$ and $B(-ie^{2i\eta}\overline{s_{22}^{\mrm{asy}}(L_{\star})},\eps)$ do not intersect. This is possible because $|s_{22}^{\mrm{asy}}(L_{\star})|=1$ (remember that $s_{12}^{\mrm{asy}}(L_{\star})=0$). We know that there is $M$ large enough such that, for all $n\ge M$, we have $s_{11}(a_n),\,s_{11}(b_n)\in B(s_{11}^{\mrm{asy}}(L_{\star}),\eps/2)$ and $s_{22}(a_n),\,s_{22}(b_n)\in B(s_{22}^{\mrm{asy}}(L_{\star}),\eps/2)$. From (\ref{RelationContradicton}), we deduce that we must have both
\[
B(s_{11}^{\mrm{asy}}(L_{\star}),\eps/2)\cap B(ie^{2i\eta}\overline{s_{22}^{\mrm{asy}}(L_{\star})},\eps/2)\neq\emptyset\ \mbox{ and }\  B(s_{11}^{\mrm{asy}}(L_{\star}),\eps/2)\cap B(-ie^{2i\eta}\overline{s_{22}^{\mrm{asy}}(L_{\star})},\eps/2)\neq\emptyset.
\]
This is impossible because there holds $B(\overline{s_{22}^{\mrm{asy}}(L_{\star}}),\eps)\cap B(-\overline{s_{22}^{\mrm{asy}}(L_{\star})},\eps)=\emptyset$. This shows that for all $n\ge\max(M,N)$, $L\mapsto s_{12}(L)$ cancels on $I_n$. As a consequence, $L\mapsto s_{12}(L)$ passes through zero an infinite number of times as $L\to+\infty$. More precisely, it passes trough zero almost periodically with a period $\pi/k$.
\end{proof}

\section{Trapped modes}\label{SectionTrappedModes}

In the previous section, for a given wavenumber $k_0\in(0;\pi)$, we showed how to construct geometries with two open channels such that the transmission coefficient $s_{12}$ defined in (\ref{DefScatteringCoeff}) is equal to zero. Now, from a given waveguide $\Om_{\infty}$ where we know that $s_{12}=0$, truncating one open channel, we explain how to find a geometry $\Om_{\mathcal{L}}$ with one open channel supporting trapped modes for Problem (\ref{PbInitial}) at wavenumbers $k$ close to $k_0$. We remind the reader that we say that $u$ is a trapped mode for Problem (\ref{PbInitial}) if $u$ belongs to the Sobolev space $\mH^1(\Om_{\mathcal{L}})$ and verifies (\ref{PbInitial}). Note that $\Om_{\infty}$, the waveguide where $s_{12}=0$, can result from the construction of the previous section ($\Om_{\infty}=\om_L$), but not necessarily.

\subsection{Augmented scattering matrix}

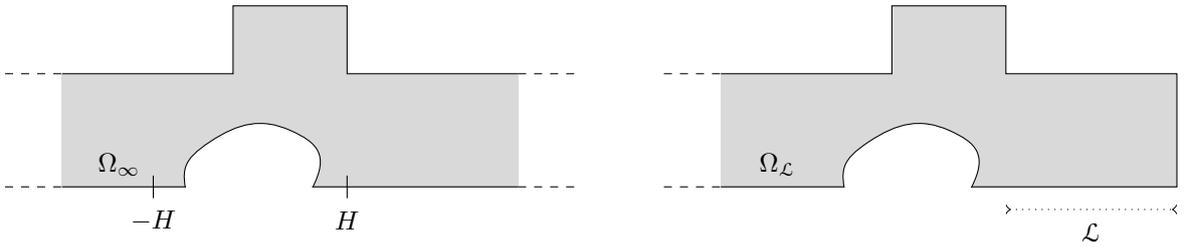
\begin{figure}[!ht]
\centering
\begin{tikzpicture}[scale=1.5]
\draw[fill=gray!30,draw=none](-2,1) rectangle (0.5,2);
\draw[fill=gray!30,draw=none](-3.5,1) rectangle (-2,2);
\draw[fill=gray!30,draw=none](-2,1.8) rectangle (-1,2.6);
\draw (-2,1)--(0.5,1); 
\draw (-3.5,2)--(-2,2)--(-2,2.6)--(-1,2.6)--(-1,2)--(0.5,2); 
\draw (-3.5,1)--(-2,1); 
\draw (-2.7,0.9)--(-2.7,1.1); 
\draw (-1,0.9)--(-1,1.1); 
\draw[dashed] (0.5,1)--(1,1); 
\draw[dashed] (0.5,2)--(1,2);
\draw[dashed] (-4,2)--(-3.5,2);
\draw[dashed] (-4,1)--(-3.5,1);
\node at (-3,1.2){\small $\Om_{\infty}$};
\node at (-2.7,0.7){\small $-H$};
\node at (-1,0.7){\small $H$};
\begin{scope}
\clip (-3,0.9) rectangle (0.5,3);
\draw [fill=white,draw=none] plot [smooth cycle, tension=1] coordinates {(-2.4,0.9) (-2,0.5) (-1.3,1) (-1.5,1.5) (-2.2,1.4)};
\end{scope}
\begin{scope}
\clip (-3,1) rectangle (0.5,3);
\draw [fill=white] plot [smooth cycle, tension=1] coordinates {(-2.4,0.9) (-2,0.5) (-1.3,1) (-1.5,1.5) (-2.2,1.4)};
\end{scope}
\phantom{\node at (-0.25,0.6){\small $\mathcal{L}$};}
\end{tikzpicture}\qquad\quad\begin{tikzpicture}[scale=1.5]
\draw[fill=gray!30,draw=none](-2,1) rectangle (0.5,2);
\draw[fill=gray!30,draw=none](-3.5,1) rectangle (-2,2);
\draw[fill=gray!30,draw=none](-2,1.8) rectangle (-1,2.6);
\draw (-3.5,2)--(-2,2)--(-2,2.6)--(-1,2.6)--(-1,2)--(0.5,2)--(0.5,1)--(-2,1); 
\draw (-3.5,1)--(-2,1); 
\draw[dashed] (-4,2)--(-3.5,2);
\draw[dashed] (-4,1)--(-3.5,1);
\draw[dotted,>-<] (-1,0.8)--(0.5,0.8);
\node at (-0.25,0.6){\small $\mathcal{L}$};
\node at (-3,1.2){\small $\Om_{\mathcal{L}}$};
\begin{scope}
\clip (-3,0.9) rectangle (0.5,3);
\draw [fill=white,draw=none] plot [smooth cycle, tension=1] coordinates {(-2.4,0.9) (-2,0.5) (-1.3,1) (-1.5,1.5) (-2.2,1.4)};
\end{scope}
\begin{scope}
\clip (-3,1) rectangle (0.5,3);
\draw [fill=white] plot [smooth cycle, tension=1] coordinates {(-2.4,0.9) (-2,0.5) (-1.3,1) (-1.5,1.5) (-2.2,1.4)};
\end{scope}
\end{tikzpicture}
\caption{Geometries of $\Om_{\infty}$ (left) and $\Om_{\mathcal{L}}$ (right). \label{DomainTrapped}} 
\end{figure}

\noindent Let us start with a waveguide $\Om_{\infty}\subset\R^2$ which coincides for $\pm x > H$, where $H>0$ is given, with the strip $\R\times(0;1)$ (see Figure \ref{DomainTrapped} left). Again, this geometry is chosen only to simplify the presentation and other settings where the analysis below can be conducted are discussed in Section \ref{SectionConclusion}. For a given wavenumber $k\in(0;\pi)$, we assume that $\Om_{\infty}$ is such that the transmission coefficient $s_{12}$ appearing in (\ref{DefScatteringCoeff}) is equal to zero (zero transmission). We refer the reader to Section \ref{SectionPerfectReflectivity} or to \cite{ChNPSu,ChPaSu} for the construction of such domains. Then for $\mathcal{L}>0$, we define the half-waveguide (unbounded in the left direction)
\[
\Om_\mathcal{L}:=\{(x,y)\in\Om_{\infty}\,|\,x<H+\mathcal{L}\}.
\]
In the following, we explain how to find $\mathcal{L}>0$ such that trapped modes exist for Problem (\ref{PbInitial}) in $\Om_\mathcal{L}$. We impose  Neumann boundary conditions on $\{H+\mathcal{L}\}\times(0;1)$ but we could also work with Dirichlet. \\ 
\newline
Set $\beta:=\sqrt{\pi^2-k^2}$ and 
\[
W^{\pm}_{1}(x,y)=w^{\pm}_{1}(x,y)=\cfrac{1}{\sqrt{2k}}\,e^{\mp i k x},\qquad W^{\pm}_{2}(x,y)=\cfrac{1}{\sqrt{2\beta}}\,(e^{-\beta x}\mp i e^{\beta x })\cos(\pi y).
\]
Note the particular definition of the functions $W^{\pm}_{2}$ which are ``wave packets'', combinations of exponentially decaying and growing modes as $x\to-\infty$. The normalisation coefficients for $W^{\pm}_{1}$, $W^{\pm}_{2}$ are chosen so that the matrix defined in (\ref{DefAugmentedMatrixScattering}) is unitary. In \cite{NaPl94bis,KaNa02,Naza06,Naza11}, it is proved that in the half-waveguide $\Om_\mathcal{L}$, there are the solutions
\begin{equation}\label{DefSolutionScaAug}
\begin{array}{lcl}
U_{1}&=&W^-_{1}+S_{11}\,W^+_{1}+S_{12}\,W^+_{2}+\tilde{U}_{1}\\[4pt]
U_{2}&=&W^-_{2}+S_{21}\,W^+_{1}+S_{22}\,W^+_{2}+\tilde{U}_{2}
\end{array}
\end{equation}
where $\tilde{U}_{1}$, $\tilde{U}_{2}$ decay as $O(e^{\sqrt{4\pi^2-k^2}x})$ when $x\to-\infty$. The complex constants $S_{ij}$, $i,j\in\{1,2\}$ in (\ref{DefSolutionScaAug}) are uniquely defined. They allow us to define the augmented scattering matrix introduced in \cite{NaPl94bis,KaNa02,Naza06,Naza11}
\begin{equation}\label{DefAugmentedMatrixScattering}
\mathcal{S}:=\left(\begin{array}{cc}
S_{11} & S_{12}\\
S_{21} & S_{22}
\end{array}\right)\in\mathbb{C}^{2\times 2}.
\end{equation}
Working exactly as in the proof of Lemma \ref{LemmaUnitary} in Annex, one can show that $\mathcal{S}$ is unitary ($\mathcal{S}\,\overline{\mathcal{S}}^{\top}=\mrm{Id}^{2\times2}$) and symmetric ($S_{21}=S_{12}$). This augmented scattering matrix provides an algebraic criterion to detect the presence of trapped modes (see e.g. \cite[Thm. 2]{Naza11}):
\begin{lemma}\label{LemmaExistenceTrappedMode}
If $S_{22}=-1$, then $U_{2}$ is a trapped mode for Problem (\ref{PbInitial}) set in $\Om_\mathcal{L}$. 
\end{lemma}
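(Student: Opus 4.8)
The plan is to exploit the unitarity of the augmented scattering matrix $\mathcal{S}$ together with the hypothesis $S_{22}=-1$ to suppress the propagating part of $U_2$, and then to read off from the explicit form of the wave packets $W^{\pm}_{2}$ that the remaining field decays exponentially as $x\to-\infty$, so that $U_2$ is a genuine finite-energy solution.

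First I would use unitarity. Since $\mathcal{S}\,\overline{\mathcal{S}}^{\top}=\mrm{Id}^{2\times 2}$, the second row of $\mathcal{S}$ has unit Euclidean norm, i.e. $|S_{21}|^2+|S_{22}|^2=1$. The assumption $S_{22}=-1$ forces $|S_{21}|^2=0$, hence $S_{21}=0$. Substituting $S_{21}=0$ and $S_{22}=-1$ into the second line of (\ref{DefSolutionScaAug}) gives $U_2=W^-_{2}-W^+_{2}+\tilde{U}_{2}$. This is where the special ``wave packet'' normalisation of $W^{\pm}_{2}$ pays off: from their definition, the growing exponentials $e^{-\beta x}\cos(\pi y)$ contained in $W^-_{2}$ and in $W^+_{2}$ cancel exactly, leaving only the decaying part,
\[
W^-_{2}-W^+_{2}=\cfrac{2i}{\sqrt{2\beta}}\,e^{\beta x}\cos(\pi y).
\]
Since $\beta=\sqrt{\pi^2-k^2}>0$ for $k\in(0;\pi)$, this term tends to zero exponentially as $x\to-\infty$.

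Next I would conclude on the regularity and the equation. The remainder $\tilde{U}_{2}$ decays even faster, namely as $O(e^{\sqrt{4\pi^2-k^2}\,x})$, so the whole field $U_2$ decays exponentially at $-\infty$; as $\Om_\mathcal{L}$ is bounded in the right direction (truncated at $x=H+\mathcal{L}$), this yields $U_2\in\mH^1(\Om_\mathcal{L})$. By construction $U_2$ already solves the Helmholtz equation with the Neumann boundary condition of (\ref{PbInitial}) in $\Om_\mathcal{L}$, so $U_2$ is a finite-energy solution of the homogeneous problem.

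Finally I would verify that $U_2\not\equiv 0$, which is the only point requiring a little care. On each cross-section $\{x\}\times(0;1)$ for $x$ large negative, the transverse profile $\cos(\pi y)$ is orthogonal both to the constant mode carried by $W^{\pm}_{1}$ and to the higher modes making up $\tilde{U}_{2}$; hence the $\cos(\pi y)$-component of $U_2$ equals exactly $\tfrac{2i}{\sqrt{2\beta}}\,e^{\beta x}$, which is non-zero. Therefore $U_2\not\equiv 0$, and it is a genuine trapped mode. I expect the main (and rather mild) obstacle to be precisely this separation-of-modes argument establishing non-triviality; the rest reduces to the one-line consequence $S_{21}=0$ of unitarity and to the explicit cancellation of the growing exponentials.
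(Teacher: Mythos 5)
Your proposal is correct and follows essentially the same route as the paper's own proof: unitarity of $\mathcal{S}$ forces $S_{21}=0$, the growing exponentials in $W^-_2-W^+_2$ cancel leaving a nonzero multiple of $e^{\beta x}\cos(\pi y)$, and together with the faster decay of $\tilde{U}_2$ this gives $U_2\not\equiv 0$ in $\mH^1(\Om_\mathcal{L})$. Your added remark on modal orthogonality to justify $U_2\not\equiv 0$ (and your sign $+i\sqrt{2/\beta}$ versus the paper's $-i\sqrt{2/\beta}$, an immaterial discrepancy) only makes explicit what the paper leaves implicit.
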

\begin{remark}
Note that $S_{22}=-1$ is only a sufficient criterion of existence of trapped modes. Indeed the geometry $\Om_\mathcal{L}$ can support trapped modes for Problem (\ref{PbInitial}) with $S_{22}\ne-1$. In this case, these trapped modes must decay as $O(e^{\sqrt{4\pi^2-k^2}x})$ when $x\to-\infty$.
\end{remark}
\begin{proof}
If $S_{22}=-1$, since $\mathcal{S}$ is unitary, then $S_{21}=0$. In such a situation, according to (\ref{DefSolutionScaAug}), we have $U_2=-i\sqrt{2/\beta}\,e^{\beta x}\cos(\pi y)+O(e^{\sqrt{4\pi^2-k^2}x})$ as $x\to-\infty$. This shows that $U_2\not\equiv0$ belongs to $\mH^1(\Om_\mathcal{L})$. In other words $U_2$ is a trapped mode. 
\end{proof}

\subsection{Asymptotic analysis of the coefficients of the augmented scattering matrix}\label{paragraphAsymptoCoefAug}
The augmented scattering matrix $\mathcal{S}$ depends on $\mathcal{L}$. In this section, we compute an asymptotic expansion of all the elements of $\mathcal{S}(\mathcal{L})$ as $\mathcal{L}\to+\infty$. In particular, we will show that the dominant asymptotic term of $\mathcal{L}\mapsto S_{22}(\mathcal{L})$ hits the unit circle.\\
To proceed, we work exactly as in \S\ref{paragraphAsymptoAnalysis}. In the geometry $\Om_{\infty}$ (see Figure \ref{DomainTrapped} right) obtained from $\Om_{\mathcal{L}}$ making $\mathcal{L}\to+\infty$, there are the solutions 
\[
\begin{array}{lcl}
U_{1}^{\infty}&=& \chi_l(W^-_{1}+S^{\infty}_{11}\,W^+_{1}+S^{\infty}_{12}\,W^+_{2})+\chi_r\,S^{\infty}_{13}\,W^+_{1}+\tilde{U}_{1}^{\infty}\\[4pt]
U_{2}^{\infty}&=&\chi_l(W^-_{2}+S^{\infty}_{21}\,W^+_{1}+S^{\infty}_{22}\,W^+_{2})+\chi_r\,S^{\infty}_{23}\,W^+_{1}+\tilde{U}_{2}^{\infty}\\[4pt]
U_{3}^{\infty}&=&\chi_l(S^{\infty}_{31}\,W^+_{1}+S^{\infty}_{32}\,W^+_{2})+\chi_r\,(W^-_{1}+S^{\infty}_{33}\,W^+_{1})+\tilde{U}_{3}^{\infty},
\end{array}
\]
where $\tilde{U}_{1}^{\infty}$, $\tilde{U}_{2}^{\infty}$, $\tilde{U}_{3}^{\infty}$ decay as $O(e^{\sqrt{4\pi^2-k^2}x})$ for $x\to-\infty$ and as $O(e^{-\sqrt{\pi^2-k^2}x})$ for $x\to+\infty$. Here $\chi_l$, $\chi_r$ are the cut-off functions introduced in (\ref{DefScatteringCoeff}). The scattering matrix
\begin{equation}\label{MatriceScaAugLim}
\mathcal{S}^{\infty}:=\left(\begin{array}{ccc}
S^{\infty}_{11} & S^{\infty}_{12} & S^{\infty}_{13}\\
S^{\infty}_{21} & S^{\infty}_{22} & S^{\infty}_{23} \\
S^{\infty}_{31} & S^{\infty}_{32} & S^{\infty}_{33}
\end{array}\right)\in\mathbb{C}^{3\times 3}
\end{equation}
is unitary and symmetric. For $U_{1}$, $U_{2}$, we make the ansatzs
\[
\begin{array}{lcl}
U_{1} &=& U_{1}^{\infty}+A_{1}(\mathcal{L})\,U_{3}^{\infty}+\dots \\
U_{2} &=& U_{2}^{\infty}+A_{2}(\mathcal{L})\,U_{3}^{\infty}+\dots \  .
\end{array}
\]
On $\{H+L\}\times(0;1)$, the conditions $\partial_nU_{1}=0$, $\partial_nU_{2}=0$ lead to choose $A_{1}(\mathcal{L})$, $A_{2}(\mathcal{L})$ such that 
\[
S^{\infty}_{13}\,e^{ik\mathcal{L}}+A_1(\mathcal{L})\,(-e^{-ik\mathcal{L}}+S^{\infty}_{33}\,e^{ik\mathcal{L}})=0\qquad\Leftrightarrow\qquad A_1(\mathcal{L})=\dsp\cfrac{S^{\infty}_{13}}{e^{-2ik\mathcal{L}}-S^{\infty}_{33}}
\]
\[
S^{\infty}_{23}\,e^{ik\mathcal{L}}+A_2(\mathcal{L})\,(-e^{-ik\mathcal{L}}+S^{\infty}_{33}\,e^{ik\mathcal{L}})=0\qquad\Leftrightarrow\qquad A_2(\mathcal{L})=\dsp\cfrac{S^{\infty}_{23}}{e^{-2ik\mathcal{L}}-S^{\infty}_{33}}\,.
\]
As in \S\ref{paragraphAsymptoAnalysis}, we assume that $|S^{\infty}_{33}|\ne1$ (see \S\ref{paragraphDegenerated} in the Annex for the case $|S^{\infty}_{33}|=1$). Then we have $\mathcal{S}=\mathcal{S}^{\mrm{asy}}(\mathcal{L})+\dots$, with
\[
\mathcal{S}^{\mrm{asy}}(\mathcal{L})=(\mathcal{S}^{\mrm{asy}}_{ij})_{1\le i,j\le 2}:=\left(\begin{array}{cc}
S^{\infty}_{11} & S^{\infty}_{12}\\
S^{\infty}_{21} & S^{\infty}_{22}
\end{array}\right)+\left(\begin{array}{c}A_1(\mathcal{L}) \\ A_2(\mathcal{L})\end{array}\right)  \Big(\ S^{\infty}_{31}\quad  S^{\infty}_{32}\ \Big).
\]
In other words, we have
\begin{equation}\label{AsymptoticAugm}
\begin{array}{lcl}
S_{11} = S^{\infty}_{11}+\cfrac{S^{\infty}_{13}\,S^{\infty}_{31}}{e^{-2ik\mathcal{L}}-S^{\infty}_{33}} +\dots&\qquad \qquad&\qquad S_{12} = S^{\infty}_{12}+\cfrac{S^{\infty}_{13}\,S^{\infty}_{32}}{e^{-2ik\mathcal{L}}-S^{\infty}_{33}} +\dots\quad\\[10pt]
S_{21} = S^{\infty}_{21}+\cfrac{S^{\infty}_{23}\,S^{\infty}_{31}}{e^{-2ik\mathcal{L}}-S^{\infty}_{33}} +\dots\ 
&\qquad \mbox{and}&\qquad S_{22} = S^{\infty}_{22}+\cfrac{S^{\infty}_{23}\,S^{\infty}_{32}}{e^{-2ik\mathcal{L}}-S^{\infty}_{33}} +\dots.
\end{array}
\end{equation}
The dots stand for exponentially small terms. More precisely, we can establish an error estimate of the form $\|\mathcal{S}-\mathcal{S}^{\mrm{asy}}(\mathcal{L})\| \le C\,e^{-\beta \mathcal{L}}$ where $C$ is a constant independent of $\mathcal{L}>0$. Since $\mathcal{S}^{\infty}$ is symmetric, $\mathcal{S}^{\mrm{asy}}(\mathcal{L})$ is also symmetric. Moreover working as in Lemma \ref{lemmaUnitary} in Annex, one can check that for all $\mathcal{L}>0$, $\mathcal{S}^{\mrm{asy}}(\mathcal{L})$ is unitary. As $\mathcal{L}$ tends to $+\infty$, the coefficients $S_{11}$, $S_{12}=S_{21}$ and $S_{22}$ run respectively on the sets
\begin{equation}\label{setSPlusMoins}
\Gamma_{11}:=\{S^{\infty}_{11}+\dsp\frac{S^{\infty}_{13}\,S^{\infty}_{31}}{z-S^{\infty}_{33}} \,|\,z\in\mathscr{C}\},\ \Gamma_{12}:=\{S^{\infty}_{12}+\dsp\frac{S^{\infty}_{13}\,S^{\infty}_{32}}{z-S^{\infty}_{33}} \,|\,z\in\mathscr{C}\},\ \Gamma_{22}:=\{S^{\infty}_{22}+\dsp\frac{S^{\infty}_{23}\,S^{\infty}_{32}}{z-S^{\infty}_{33}} \,|\,z\in\mathscr{C}\}.
\end{equation}
As in the previous section, one finds that $\Gamma_{11}$, $\Gamma_{12}$, $\Gamma_{22}$ coincide with circles centered respectively at 
\begin{equation}\label{eqnCenters}
Z_{11}:=S^{\infty}_{11}+\dsp\frac{S^{\infty}_{13}\,\overline{S^{\infty}_{33}}\,S^{\infty}_{31}}{1-|S^{\infty}_{33}|^{2}},\qquad Z_{12}:=S^{\infty}_{12}+\dsp\frac{S^{\infty}_{13}\,\overline{S^{\infty}_{33}}\,S^{\infty}_{32}}{1-|S^{\infty}_{33}|^{2}},\qquad
Z_{22}:=S^{\infty}_{22}+\dsp\frac{S^{\infty}_{23}\,\overline{S^{\infty}_{33}}\,S^{\infty}_{32}}{1-|S^{\infty}_{33}|^{2}}
\end{equation}
of radii
\begin{equation}\label{eqnRadius}
P_{11}:=\dsp\frac{|S^{\infty}_{13}\,S^{\infty}_{31}|}{1-|S^{\infty}_{33}|^{2}},\qquad P_{12}:=\dsp\frac{|S^{\infty}_{13}\,S^{\infty}_{32}|}{1-|S^{\infty}_{33}|^{2}},\qquad P_{22}:=\dsp\frac{|S^{\infty}_{23}\,S^{\infty}_{32}|}{1-|S^{\infty}_{33}|^{2}}.
\end{equation}
Working exactly as in the proof of Proposition \ref{PropositionGoesThroughZero}, we can show the following statement.
\begin{proposition}\label{PropositionAugmScaUnit}
Assume that the coefficients $S_{12}^{\infty}$, $S_{13}^{\infty}$, $S_{23}^{\infty}$ in (\ref{MatriceScaAugLim}) satisfy $S_{12}^{\infty}\,S_{13}^{\infty}\,S_{23}^{\infty}\ne0$. Then the circle $\Gamma_{12}$ passes through zero. As a consequence, since $\mathcal{S}^{\mrm{asy}}(\mathcal{L})$ is unitary, we deduce that the circles $\Gamma_{11}$, $\Gamma_{22}$ intersect the unit circle $\mathscr{C}$ at exactly one point.
\end{proposition}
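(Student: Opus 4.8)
The plan is to treat the two assertions separately, reusing the computation of Proposition~\ref{PropositionGoesThroughZero} for the first and then leveraging the \emph{full} unitarity of $\mathcal{S}^{\mrm{asy}}(\mathcal{L})$ for the second. Throughout I write $f_{11}(z):=S^{\infty}_{11}+S^{\infty}_{13}S^{\infty}_{31}/(z-S^{\infty}_{33})$, $f_{12}(z):=S^{\infty}_{12}+S^{\infty}_{13}S^{\infty}_{32}/(z-S^{\infty}_{33})$ and $f_{22}(z):=S^{\infty}_{22}+S^{\infty}_{23}S^{\infty}_{32}/(z-S^{\infty}_{33})$ for the M\"obius parametrizations of $\Gamma_{11},\Gamma_{12},\Gamma_{22}$, so that $\Gamma_{ij}=f_{ij}(\mathscr{C})$.

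First I would establish that $\Gamma_{12}$ passes through zero by repeating verbatim the argument of Proposition~\ref{PropositionGoesThroughZero} with $\mathcal{S}^{\infty}$ in place of $\mathbb{S}^{\infty}$: the equation $f_{12}(z)=0$ is solved by $z=S^{\infty}_{33}-S^{\infty}_{13}S^{\infty}_{32}/S^{\infty}_{12}$, which is well defined since $S^{\infty}_{12}\neq0$, and the unitarity of $\mathcal{S}^{\infty}$ forces $|z|=1$, so this root lies on $\mathscr{C}$. I call it $z_{\star}$. It is the \emph{unique} preimage of $0$, because the hypothesis $S^{\infty}_{13}S^{\infty}_{23}\neq0$ together with the symmetry $S^{\infty}_{31}=S^{\infty}_{13}$, $S^{\infty}_{32}=S^{\infty}_{23}$ makes $f_{12}$ a genuine (non-constant, hence injective) M\"obius transformation; the same symmetry and hypothesis make $f_{11}$ and $f_{22}$ injective as well, a fact I will need below.

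For the second assertion, the key coupling comes from the fact that $\mathcal{S}^{\mrm{asy}}(\mathcal{L})$ is a symmetric unitary $2\times2$ matrix for \emph{every} $\mathcal{L}>0$, equivalently for every $z\in\mathscr{C}$. Orthonormality of its two rows then yields, for all $z\in\mathscr{C}$,
\[
|f_{11}(z)|^2+|f_{12}(z)|^2=1\qquad\mbox{and}\qquad|f_{22}(z)|^2+|f_{12}(z)|^2=1.
\]
In particular $|f_{11}(z)|\le1$ and $|f_{22}(z)|\le1$ everywhere on $\mathscr{C}$, so $\Gamma_{11}$ and $\Gamma_{22}$ are contained in the closed unit disk and can only meet $\mathscr{C}$ where $f_{12}(z)=0$. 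Thus $\{z\in\mathscr{C}:|f_{11}(z)|=1\}=\{z\in\mathscr{C}:f_{12}(z)=0\}=\{z_{\star}\}$, and applying the injective map $f_{11}$ gives $\Gamma_{11}\cap\mathscr{C}=\{f_{11}(z_{\star})\}$, a single point; the identical reasoning with $f_{22}$ disposes of $\Gamma_{22}$.

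The step I expect to be the genuine crux is the passage from ``at least one'' to ``exactly one''. The ``at least one'' part is immediate—at $z_{\star}$ the matrix $\mathcal{S}^{\mrm{asy}}$ is diagonal and unitary, forcing $|S_{11}|=|S_{22}|=1$—but it does not by itself preclude a second crossing. The sharpening rests on two facts working together: the inequality $|f_{1j}|\le1$ shows $\Gamma_{11},\Gamma_{22}$ are \emph{interior-tangent} to $\mathscr{C}$ rather than crossing it, while the injectivity of $f_{12}$ guarantees its zero on $\mathscr{C}$ is unique; combining them is exactly what forces each intersection to reduce to the single point $f_{1j}(z_{\star})$.
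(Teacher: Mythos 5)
Your proof is correct and takes essentially the same route the paper intends: the first assertion by rerunning the computation of Proposition~\ref{PropositionGoesThroughZero} with $\mathcal{S}^{\infty}$ in place of $\mathbb{S}^{\infty}$, and the second from the row orthonormality of the unitary matrix $\mathcal{S}^{\mrm{asy}}(\mathcal{L})$, which gives $|S^{\mrm{asy}}_{11}(z)|=1\Leftrightarrow S^{\mrm{asy}}_{12}(z)=0\Leftrightarrow z=z_{\star}$ on $\mathscr{C}$, and likewise for $S^{\mrm{asy}}_{22}$. The only cosmetic remark is that the injectivity of $f_{11}$ and $f_{22}$ you invoke is superfluous: each intersection is the image of the singleton $\{z_{\star}\}$, hence a single point in any case.
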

\begin{remark}
From this proposition, it follows that the dominant asymptotic term of $\mathcal{L}\mapsto S_{22}(\mathcal{L})$ hits the unit circle. This is the property that will be used below to prove the existence of trapped modes. 
\end{remark}
\begin{remark}
As in the previous section, we assume that $S_{12}^{\infty}\,S_{13}^{\infty}\,S_{23}^{\infty}\ne0$, which implies $|S_{33}^{\infty}|\ne1$, so that couplings exist between the modes in $\Om_{\infty}$. 
\end{remark}

\subsection{The particular case where zero transmission occurs in $\Om_{\infty}$}\label{paragraphParticularCase}
In $\Om_{\infty}$, there are also the classical solutions $u_1$, $u_2$ introduced in (\ref{DefScatteringCoeff}) which allow one to define the usual scattering matrix $\mathbb{S}\in\Cplx^{2\times 2}$ in (\ref{DefMatrixScattering}) (for the identity relating $\mathbb{S}\in\Cplx^{2\times 2}$ and $\mathcal{S}^{\infty}\in\Cplx^{3\times 3}$, we refer the reader to \cite[Thm. 3]{Naza11}). In this section, we are interested in situations (geometries) where $s_{12}=s_{21}=0$ (zero transmission). 
In this case, we establish an additional property for the asymptotic circle $\Gamma_{22}$ defined in (\ref{setSPlusMoins}).
\begin{proposition}\label{PropositionThroughMinusOne}
Assume that the coefficients $S_{12}^{\infty}$, $S_{13}^{\infty}$, $S_{23}^{\infty}$ in (\ref{MatriceScaAugLim}) satisfy $S_{12}^{\infty}\,S_{13}^{\infty}\,S_{23}^{\infty}\ne0$. Assume also that we have $s_{12}=0$ (zero transmission in $\Om_{\infty}$). Then the circle $\Gamma_{22}$ intersects the unit circle $\mathscr{C}$ at the point of affix $-1+0i$ (see Figure \ref{Scattering08pi}). 
\end{proposition}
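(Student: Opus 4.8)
The plan is to follow the proof of Proposition \ref{PropositionGoesThroughZero} almost verbatim, the only genuinely new input being the translation of the hypothesis $s_{12}=0$ into an algebraic relation between the entries of $\mathcal{S}^{\infty}$. First I would reduce the geometric statement to a modulus-one condition. Since $S^{\infty}_{23}S^{\infty}_{32}\neq0$ and $|S^{\infty}_{33}|\neq1$, the map $z\mapsto S^{\infty}_{22}+S^{\infty}_{23}S^{\infty}_{32}/(z-S^{\infty}_{33})$ sends $\mathscr{C}$ bijectively onto the circle $\Gamma_{22}$ of (\ref{setSPlusMoins}). Hence, provided $S^{\infty}_{22}\neq-1$, the affix $-1$ belongs to $\Gamma_{22}$ if and only if its (unique) preimage
\[
z_0:=S^{\infty}_{33}-\dsp\frac{S^{\infty}_{23}\,S^{\infty}_{32}}{1+S^{\infty}_{22}}
\]
lies on $\mathscr{C}$, i.e. if and only if $|z_0|=1$. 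Everything thus reduces to proving $|z_0|=1$.

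The key step is to exploit the identity relating the physical matrix $\mathbb{S}$ to the augmented matrix $\mathcal{S}^{\infty}$ (see \cite[Thm. 3]{Naza11}) in order to convert $s_{12}=0$ into a scalar relation among the $S^{\infty}_{ij}$. Concretely, I would expand the physical scattering solution $u_1$ of (\ref{DefScatteringCoeff}) on the augmented solutions, $u_1=U^{\infty}_1+\alpha_2\,U^{\infty}_2+\alpha_3\,U^{\infty}_3$. Two requirements then pin down $\alpha_2,\alpha_3$: that $u_1$ carry no incoming propagating wave from channel $2$, and --- crucially --- that $u_1$ be bounded in channel $1$, so that the growing evanescent component $e^{-\beta x}\cos(\pi y)$ produced by the wave packets $W^{\pm}_2$ cancels out. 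Reading off the outgoing amplitude in channel $2$ identifies the transmission coefficient as $s_{12}=\alpha_3$. Imposing $s_{12}=0$, that is $\alpha_3=0$, and demanding the two conditions be compatible yields the relation
\[
S^{\infty}_{13}\,(1+S^{\infty}_{22})=S^{\infty}_{12}\,S^{\infty}_{23},
\]
which I denote by ($\star$). In particular ($\star$) combined with the standing assumption $S^{\infty}_{12}S^{\infty}_{13}S^{\infty}_{23}\neq0$ forces $1+S^{\infty}_{22}\neq0$, which legitimises the reduction of the previous paragraph.

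It then remains to substitute ($\star$) into $z_0$. Since ($\star$) gives $S^{\infty}_{23}/(1+S^{\infty}_{22})=S^{\infty}_{13}/S^{\infty}_{12}$, we obtain
\[
z_0=S^{\infty}_{33}-\dsp\frac{S^{\infty}_{13}\,S^{\infty}_{32}}{S^{\infty}_{12}},
\]
which is exactly the complex number already analysed in Proposition \ref{PropositionGoesThroughZero}. Consequently the unitarity of $\mathcal{S}^{\infty}$ yields $|z_0|=1$ through the very same computation as there (introduce the auxiliary quantity $C$ of that proof, with $s^{\infty}_{ij}$ replaced by $S^{\infty}_{ij}$, and check $C=0$ using the orthonormality of the columns of $\mathcal{S}^{\infty}$). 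This proves that $\Gamma_{22}$ meets the unit circle at the affix $-1+0i$.

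I expect the middle step to be the main obstacle. It requires a careful bookkeeping, for each augmented solution $U^{\infty}_j$, of the amplitude of the growing versus the decaying evanescent part carried by $W^{\pm}_2$, together with the incoming and outgoing propagating amplitudes in both channels, so as to extract precisely the scalar consequence ($\star$) of zero transmission. Once ($\star$) is established, the remaining algebra is only a substitution followed by a verbatim reuse of Proposition \ref{PropositionGoesThroughZero}, so no new estimate or idea is needed.
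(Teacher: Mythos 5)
Your skeleton is correct, and your relation ($\star$) is exactly the identity the paper derives: since $\mathcal{S}^{\infty}$ is symmetric ($S^{\infty}_{13}=S^{\infty}_{31}$, $S^{\infty}_{12}=S^{\infty}_{21}$, $S^{\infty}_{23}=S^{\infty}_{32}$), your $S^{\infty}_{13}(1+S^{\infty}_{22})=S^{\infty}_{12}\,S^{\infty}_{23}$ is precisely the paper's relation (\ref{RelationT0}), $S^{\infty}_{31}=S^{\infty}_{32}S^{\infty}_{21}/(1+S^{\infty}_{22})$. The two proofs differ both in how this relation is obtained and in the endgame. For the derivation, the paper does not expand $u_1$ on the augmented solutions; it writes $u_1$ in channel 1 via an elementary Fourier expansion (introducing one unknown evanescent amplitude $\lambda$) and then pairs $u_1$ against $U^{\infty}_2$ and $U^{\infty}_3$ with the symplectic form $q$ of (\ref{DefSymplecticForm}), getting the two scalar equations (\ref{Eqn1})--(\ref{Eqn2}) and eliminating $\lambda$. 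This sidesteps the completeness-of-the-augmented-family fact your expansion $u_1=U^{\infty}_1+\alpha_2U^{\infty}_2+\alpha_3U^{\infty}_3$ implicitly relies on, and it is the rigorous implementation of exactly the bookkeeping you flagged as the main obstacle. For the endgame, the paper computes the center of $\Gamma_{22}$ and shows $Z_{22}=-1+P_{22}\in\R$, whence $-1\in\Gamma_{22}$; your route --- compute the preimage $z_0$ of $-1$ under the M\"obius map, use ($\star$) to turn it into $S^{\infty}_{33}-S^{\infty}_{13}S^{\infty}_{32}/S^{\infty}_{12}$, and invoke the computation of Proposition \ref{PropositionGoesThroughZero} --- is a genuine and arguably slicker alternative: it costs no new algebra and makes transparent the pleasant fact that, asymptotically, $S_{22}$ hits $-1$ at exactly the value of $e^{-2ik\mathcal{L}}$ at which $S_{12}$ vanishes, in agreement with the unitarity of $\mathcal{S}^{\mrm{asy}}(\mathcal{L})$.

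One slip must be fixed in your middle step: $s_{12}$ is \emph{not} $\alpha_3$. In your expansion, $\alpha_3$ is the \emph{incoming} propagating amplitude in channel 2, so the condition ``no incoming wave from channel 2'' forces $\alpha_3=0$ regardless of any hypothesis on $s_{12}$; the transmission coefficient is the \emph{outgoing} amplitude, $s_{12}=S^{\infty}_{13}+\alpha_2S^{\infty}_{23}+\alpha_3S^{\infty}_{33}$. As written (``imposing $s_{12}=0$, that is $\alpha_3=0$, and demanding the two conditions be compatible''), the argument is vacuous: your two conditions always determine $\alpha_3=0$ and $\alpha_2=-S^{\infty}_{12}/(1+S^{\infty}_{22})$ (cancellation of the growing part, whose total amplitude is $\alpha_2+S^{\infty}_{12}+\alpha_2S^{\infty}_{22}+\alpha_3S^{\infty}_{32}$), they are mutually compatible, and no relation among the $S^{\infty}_{ij}$ follows. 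The relation ($\star$) comes from the \emph{third} condition, namely zero outgoing amplitude in channel 2: $S^{\infty}_{13}+\alpha_2S^{\infty}_{23}=0$. With this correction your derivation yields exactly ($\star$), and the rest of your proof stands.
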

\begin{proof}
If $s_{12}=0$, starting from (\ref{DefScatteringCoeff}), we see that there is some $\lambda\in\Cplx$ such that $u_{1}$ (defined in (\ref{DefScatteringCoeff})) admits the expansion 
\[
u_{1} = \chi_l\,(W^+_{1}+s_{11}\,W^-_{1}+\lambda\,(W^+_{2}-W^-_{2}))+\hat{u}_{1} 
\]
with $\hat{u}_{1}$ which decays as $O(e^{\sqrt{4\pi^2-k^2}x})$ for $x\to-\infty$ and as $O(e^{-\sqrt{\pi^2-k^2}x})$ for $x\to+\infty$. Define the symplectic (sesquilinear and anti-hermitian ($q(u,v)=-\overline{q(v,u)}$))  form $q(\cdot,\cdot)$ such that for all $u$, $v\in\mH^1_{\loc}(\Om_{\infty})$
\begin{equation}\label{DefSymplecticForm}
q(u,v)=\int_{\Sigma_{-2H}\cup\Sigma_{2H}} \cfrac{\partial u}{\partial n}\,\overline{v}-u\,\cfrac{\partial \overline{v}}{\partial n}\,d\sigma.
\end{equation}
Here $\Sigma_{\pm 2H}:=\{\pm 2H\}\times(0;1)$, $\partial_n=\pm\partial_x$ at $x=\pm 2H$ and $\mH^1_{\loc}(\Om_{\infty})$ refers to the Sobolev space of functions $\varphi$ such that $\varphi|_{\mathcal{O}}\in \mH^1(\mathcal{O})$ for all bounded domains $\mathcal{O}\subset\Om_{\infty}$. Using that $U_{2}^{\infty}$, $U_{3}^{\infty}$ and $u_1$ satisfy the homogeneous Helmholtz equation in $\Om_{\infty}$, integrating by parts, one obtains $q(U_{2}^{\infty},u_1)=q(U_{3}^{\infty},u_1)=0$. On the other hand, decomposing these three functions in Fourier series on $\Sigma_{\pm 2H}$, one finds 
\begin{eqnarray}
\label{Eqn1}0\ =\ q(U_{2}^{\infty},u_1)  & = & S^{\infty}_{21}\,\overline{s_{11}}-S^{\infty}_{22}\,\overline{\lambda}-\overline{\lambda}\\[4pt]
\label{Eqn2}0\ =\ q(U_{3}^{\infty},u_1)  & = & S^{\infty}_{31}\,\overline{s_{11}}-S^{\infty}_{32}\,\overline{\lambda}.
\end{eqnarray}
Coupling (\ref{Eqn1}) and (\ref{Eqn2}), we get the additional relation 
\begin{equation}\label{RelationT0}
S^{\infty}_{31}=\cfrac{S^{\infty}_{32}\,S^{\infty}_{21}}{1+S^{\infty}_{22}}
\end{equation}
for the coefficients of the augmented scattering matrix $\mathcal{S}^{\infty}$ when $s_{12}=0$. Note that since by assumption $S^{\infty}_{23}\ne0$, we have $S^{\infty}_{22}\ne-1$. Actually the latter property is true as soon as there is no trapped mode in the geometry $\Om_{\infty}$. Now, we explain how to use Identity (\ref{RelationT0})  to show that the circle $\Gamma_{22}$ defined in (\ref{setSPlusMoins}) passes through the point of affix $-1+0i$. From (\ref{eqnCenters}), we know that the center of $\Gamma_{22}$ is given by 
\[
Z_{22}=S^{\infty}_{22}+\dsp\frac{S^{\infty}_{23}\,\overline{S^{\infty}_{33}}\,S^{\infty}_{32}}{1-|S^{\infty}_{33}|^{2}}=\dsp\frac{S^{\infty}_{22}(|S^{\infty}_{31}|^{2}+|S^{\infty}_{32}|^{2})+
S^{\infty}_{23}\,\overline{S^{\infty}_{33}}\,S^{\infty}_{32}}{|S^{\infty}_{31}|^{2}+|S^{\infty}_{32}|^{2}}.
\]
Using that $S^{\infty}_{21}\overline{S^{\infty}_{31}}+
S^{\infty}_{22}\overline{S^{\infty}_{32}}+
S^{\infty}_{23}\overline{S^{\infty}_{33}}=0$ ($\mathcal{S}^{\infty}$ is unitary), we obtain
\[
Z_{22}=\dsp\frac{S^{\infty}_{22}\,|S^{\infty}_{31}|^{2}-S^{\infty}_{32}S^{\infty}_{21}\overline{S^{\infty}_{31}}}{|S^{\infty}_{31}|^{2}+|S^{\infty}_{32}|^{2}}.
\]
With (\ref{RelationT0}), we deduce 
\[
\begin{array}{lcl}
Z_{22}=\dsp\cfrac{|S^{\infty}_{32}|^{2}\,|S^{\infty}_{21}|^2}{|S^{\infty}_{31}|^{2}+|S^{\infty}_{32}|^{2}}\,\left(\cfrac{S^{\infty}_{22}}{|1+S^{\infty}_{22}|^2}-\cfrac{1}{1+\overline{S^{\infty}_{22}}}\right)&=&-\dsp\cfrac{|S^{\infty}_{32}|^{2}\,|S^{\infty}_{21}|^2}{|S^{\infty}_{31}|^{2}+|S^{\infty}_{32}|^{2}}\,\cfrac{1}{|1+S^{\infty}_{22}|^2}\\[14pt]
&=&-\dsp\cfrac{|S^{\infty}_{31}|^{2}}{|S^{\infty}_{31}|^{2}+|S^{\infty}_{32}|^{2}}\ =\ -1+P_{22}.
\end{array}
\]
The last equality in the above equation has been obtained using the expression of $P_{22}$ in (\ref{eqnRadius}). Since $P_{22}$ stands for the radius of $\Gamma_{22}$, this shows that $\Gamma_{22}$ passes through the point of affix $-1+0i$.
\end{proof}

\noindent Proposition \ref{PropositionThroughMinusOne} together with the error estimate $\|\mathcal{S}-\mathcal{S}^{\mrm{asy}}(\mathcal{L})\| \le C\,e^{-\beta \mathcal{L}}$ show that the curve $\mathcal{L}\mapsto S_{22}(\mathcal{L})$ passes as close as we wish to the point of affix $-1+0i$ as $\mathcal{L}\to+\infty$ when we know that $s_{12}=0$. Unfortunately, contrary to what has been done in the previous section to prove that $L\mapsto s_{12}(L)$ passes through zero as $L\to+\infty$ (see the proof of Theorem \ref{MainThmPart1}), the unitary structure of $\mathcal{S}$ is not sufficient to guarantee that $\mathcal{L}\mapsto S_{22}(\mathcal{L})$ passes exactly through the point of affix $-1+0i$  as $\mathcal{L}\to+\infty$. In our analysis, we will have to play also with the wavenumber $k$.

\begin{remark}
Let us present a simple calculation allowing one to feel that the situation $s_{12}=0$ in $\Om_{\infty}$ is interesting to construct trapped modes in $\Om_{\mathcal{L}}$. When $s_{12}=s_{21}=0$, the function $u_2$ in (\ref{DefScatteringCoeff}) admits the expansion 
\[
u_{2} = \chi_r\,(w^{-}_{2}+s_{22}\,w^{+}_{2})+\tilde{u}_2,
\]
where $\tilde{u}_2$ decays exponentially as $x\to\pm\infty$. Due to conservation of energy, $s_{12}=0$ implies $s_{22}=e^{i\eta}$ for some $\eta\in[0;2\pi)$. As a consequence, on $\{H+\mathcal{L}\}\times(0;1)$, we find $\partial_n(w^{-}_{2}+s_{22}\,w^{+}_{2})= ik(-e^{-ik(H+\mathcal{L})}+e^{i\eta} e^{ik(H+\mathcal{L})})/\sqrt{2k}$. Thus, there is a periodic sequence $(\mathcal{L}_n)$ such that $\partial_n(w^{-}_{2}+s_{22}\,w^{+}_{2})=0$ on $\{\mathcal{L}_n\}\times(0;1)$. Of course, this does not show that $u_{2}$ is a trapped mode in $\Omega_{\mathcal{L}_n}$ ($\partial_nu_2$ is exponentially small on $\{H+\mathcal{L}_n\}\times(0;1)$ but not zero). However it leads to think there is a trapped mode ``close to'' $(k,\Om_{\mathcal{L}_n})$. 
\end{remark}

\subsection{Proof of existence of trapped modes}

The coefficient $S_{22}$ depends both on $\mathcal{L}$ and $k$. Up to now, we have found pairs $(\mathcal{L},k)$ such that the dominant asymptotic term of $S_{22}$ is equal to $-1$. Now, we will show that there is a sequence $(\mathcal{L}_n,k_n)$ such that we have exactly $S_{22}=-1$ (indeed, from Lemma \ref{LemmaExistenceTrappedMode}, we know that this proves the existence of trapped modes).\\
\newline
Assume that there is $k_0\in(0;\pi)$ such that the transmission coefficient $s_{12}$ in (\ref{DefScatteringCoeff}) is zero. Assume also that the coefficients $S_{12}^{\infty}$, $S_{13}^{\infty}$, $S_{23}^{\infty}$ in (\ref{MatriceScaAugLim}) satisfy $S_{12}^{\infty}\,S_{13}^{\infty}\,S_{23}^{\infty}\ne0$ at the wavenumber $k_0$. Since $\mathcal{S}^{\infty}$ depend smoothly on the wavenumber, there is $\eps>0$ such that we have $S_{12}^{\infty}\,S_{13}^{\infty}\,S_{23}^{\infty}\ne0$ for all $k\in[k_0-\eps;k_0+\eps]$. For $k\in[k_0-\eps;k_0+\eps]$, as $\mathcal{L}\to+\infty$, we know from Proposition \ref{PropositionAugmScaUnit} that there is a sequence $(\mathcal{L}_n(k))$ such that $|S_{22}(\mathcal{L}_n(k),k)|=1$. Introduce $\alpha_n(k) \in[0;2\pi)$ such that 
\[
S_{22}(\mathcal{L}_n(k),k)=e^{i\alpha_n(k)}.
\]
The sequence $(\alpha_n(k))$ tends to $\alpha_{\infty}(k)$ where $\alpha_{\infty}(k)\in[0;2\pi)$ is such that 
\[
\Gamma_{22}(k)\cap\mathscr{C} =\{e^{i\alpha_{\infty}(k)}\}.
\]
Assume that 
\begin{equation}\label{HypoAbstraite}
\mbox{The map }k\mapsto \Im m\,Z_{22}(k)\mbox{ changes sign at $k=k_0$.}
\end{equation}
We remind the reader that $Z_{22}$ is the center of the circle $\Gamma_{22}$ defined in (\ref{eqnCenters}) such that $\Im m\,Z_{22}(k_0)=0$ (Proposition \ref{PropositionThroughMinusOne}). Observe that (\ref{HypoAbstraite}) is true for example if $\Im m\,\partial_k Z_{22} |_{k=k_0} \ne 0$. In this case, since $\alpha_{\infty}(k_0)=\pi$ (again Proposition \ref{PropositionThroughMinusOne}), we know that there is $\eps>0$ (smaller than the already introduced $\eps>0$) such that $\alpha_{\infty}(k_0-\eps)-\pi$ and $\alpha_{\infty}(k_0+\eps)-\pi$ have different signs. Pick $N\in\N$ large enough so that for all $n\ge N$, the quantities $\alpha_{n}(k_0-\eps)-\pi$ and $\alpha_{n}(k_0+\eps)-\pi$ have different signs. Since the map $k\mapsto \alpha_{n}(k)$ is continuous, we know that there is $k_n^{\star}\in[k_0-\eps;k_0+\eps]$ such that $\alpha_{n}(k_n^{\star})=\pi$. Then we have 
\[
S_{22}(\mathcal{L}_n(k_n^{\star}),k_n^{\star})=e^{i\alpha_n(k_n^{\star})}=-1.
\]
This shows the existence of trapped modes for Problem (\ref{PbInitial}) at the wavenumber $k_n^{\star}$ in the geometry $\Om_{\mathcal{L}_n(k^{\star}_n)}$. Note that as $n\to+\infty$, we have $k^{\star}_n\to k_0$. Moreover, $(\mathcal{L}_n(k^{\star}_n))$ is almost periodic of period $\pi/k$. We summarize these results in the following theorem.

\begin{theorem}\label{MainThmPart2}
Assume that there holds $s_{12}=0$ (zero transmission in $\Om_{\infty}$) at the wavenumber $k_0\in(0;\pi)$. Assume also that the coefficients $S_{12}^{\infty}$, $S_{13}^{\infty}$, $S_{23}^{\infty}$ in (\ref{MatriceScaAugLim}) satisfy $S_{12}^{\infty}\,S_{13}^{\infty}\,S_{23}^{\infty}\ne0$ at the wavenumber $k_0$ and that (\ref{HypoAbstraite}) is true. Then there are sequences $(\mathcal{L}_n)$, $(k_n)$ such that Problem (\ref{PbInitial}) admits trapped modes in the geometry $\Om_{\mathcal{L}_n}$ at the wavenumber $k_n$. Moreover, there hold $\lim_{n\to+\infty} \mathcal{L}_n=+\infty$ and $\lim_{n\to+\infty} k_n=k_0$.
\end{theorem}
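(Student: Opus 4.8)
The plan is to reduce everything to producing, for each large $n$, an exact pair $(\mathcal{L}_n,k_n)$ with $S_{22}(\mathcal{L}_n,k_n)=-1$; Lemma~\ref{LemmaExistenceTrappedMode} then immediately delivers a trapped mode. The mechanism is a two-parameter continuation: the length $\mathcal{L}$ is used to force $|S_{22}|=1$ (so that $S_{22}$ lands on the unit circle $\mathscr{C}$), and the wavenumber $k$ is used to slide the resulting point of $\mathscr{C}$ across the target $-1$. To begin, since $\mathcal{S}^{\infty}$ depends smoothly on $k$ and $S_{12}^{\infty}\,S_{13}^{\infty}\,S_{23}^{\infty}\ne0$ at $k_0$, I would fix $\eps>0$ so that this nonvanishing persists on the whole compact interval $[k_0-\eps;k_0+\eps]$; on this interval all the objects $Z_{22}(k)$, $P_{22}(k)$, $\Gamma_{22}(k)$ are well defined, and by Proposition~\ref{PropositionAugmScaUnit} the circle $\Gamma_{22}(k)$ meets $\mathscr{C}$ at exactly one point for every such $k$.

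For each $k\in[k_0-\eps;k_0+\eps]$, Proposition~\ref{PropositionAugmScaUnit} yields a sequence $(\mathcal{L}_n(k))$ along which $|S_{22}(\mathcal{L}_n(k),k)|=1$; writing $S_{22}(\mathcal{L}_n(k),k)=e^{i\alpha_n(k)}$ with $\alpha_n(k)\in[0;2\pi)$, the error estimate $\|\mathcal{S}-\mathcal{S}^{\mrm{asy}}(\mathcal{L})\|\le C\,e^{-\beta\mathcal{L}}$ shows that $\alpha_n(k)\to\alpha_{\infty}(k)$, where $e^{i\alpha_{\infty}(k)}$ is the unique point of $\Gamma_{22}(k)\cap\mathscr{C}$. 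Here I would record the geometric fact that a single intersection point means $\Gamma_{22}(k)$ is internally tangent to $\mathscr{C}$ (indeed $|Z_{22}(k_0)|=1-P_{22}(k_0)$), so the tangency point lies in the direction of the center and $\alpha_{\infty}(k)=\arg Z_{22}(k)$. At $k=k_0$, Proposition~\ref{PropositionThroughMinusOne} identifies the tangency point as $-1$, i.e.\ $\alpha_{\infty}(k_0)=\pi$, consistent with $Z_{22}(k_0)=-1+P_{22}(k_0)\in(-1;0)$.

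The continuation in $k$ then proceeds as follows. Since $\Re e\,Z_{22}(k_0)=-1+P_{22}(k_0)<0$ and, by hypothesis~(\ref{HypoAbstraite}), $\Im m\,Z_{22}(k)$ changes sign at $k_0$, the argument $\alpha_{\infty}(k)=\arg Z_{22}(k)$ crosses the value $\pi$ as $k$ passes through $k_0$; shrinking $\eps$ if necessary, $\alpha_{\infty}(k_0-\eps)-\pi$ and $\alpha_{\infty}(k_0+\eps)-\pi$ have opposite signs. Using $\alpha_n\to\alpha_{\infty}$, I would pick $N$ so large that for every $n\ge N$ the differences $\alpha_n(k_0-\eps)-\pi$ and $\alpha_n(k_0+\eps)-\pi$ still have opposite signs. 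Since $k\mapsto\alpha_n(k)$ is continuous, the intermediate value theorem furnishes $k_n^{\star}\in[k_0-\eps;k_0+\eps]$ with $\alpha_n(k_n^{\star})=\pi$, hence $S_{22}(\mathcal{L}_n(k_n^{\star}),k_n^{\star})=-1$. Setting $\mathcal{L}_n:=\mathcal{L}_n(k_n^{\star})$ and $k_n:=k_n^{\star}$, Lemma~\ref{LemmaExistenceTrappedMode} produces the trapped modes; moreover $k_n\to k_0$ by construction, and $\mathcal{L}_n\to+\infty$ because the lengths $\mathcal{L}_n(k)$ form an almost-periodic sequence of step $\approx\pi/k$ that diverges with $n$ while $k_n^{\star}$ stays bounded.

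The main obstacle, as the text preceding the statement stresses, is that unitarity of $\mathcal{S}$ alone is not enough: it confines $S_{22}$ to $\mathscr{C}$ but cannot by itself pin it to the single value $-1$, which is why the second parameter $k$ is indispensable and why the transversality condition~(\ref{HypoAbstraite}) is imposed. The delicate technical points are (i) justifying $\alpha_{\infty}(k)=\arg Z_{22}(k)$ rigorously from the internal tangency, which persists throughout the interval by Proposition~\ref{PropositionAugmScaUnit}, and (ii) upgrading the pointwise limit $\alpha_n(k)\to\alpha_{\infty}(k)$ to enough uniformity in $k$ that, for one fixed large $n$, the opposite-sign property survives simultaneously at both endpoints; this last step relies on the estimate $\|\mathcal{S}-\mathcal{S}^{\mrm{asy}}(\mathcal{L})\|\le C\,e^{-\beta\mathcal{L}}$ holding uniformly in $k$ over the compact interval $[k_0-\eps;k_0+\eps]$.
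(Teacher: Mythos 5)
Your proposal is correct and follows essentially the same route as the paper's own proof: reduce to $S_{22}=-1$ via Lemma \ref{LemmaExistenceTrappedMode}, use Proposition \ref{PropositionAugmScaUnit} to get lengths $\mathcal{L}_n(k)$ with $S_{22}=e^{i\alpha_n(k)}$, use Proposition \ref{PropositionThroughMinusOne} and hypothesis (\ref{HypoAbstraite}) to make $\alpha_\infty(k)-\pi$ change sign across $k_0$, and conclude by the intermediate value theorem applied to $k\mapsto\alpha_n(k)$ for large $n$. Your explicit justification that the single intersection point is an internal tangency, so that $\alpha_{\infty}(k)=\arg Z_{22}(k)$ with $\Re e\,Z_{22}(k_0)<0$, is a welcome filling-in of a step the paper leaves implicit, but it does not change the argument.
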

\begin{remark}
In Section \ref{SectionNumExpe}, \S\ref{paragraphTrappedModes} below, we provide an example of situation where numerically the assumptions of Theorem \ref{MainThmPart2} are satisfied. 
\end{remark}

\section{Numerical experiments}\label{SectionNumExpe}

\subsection{Zero transmission}\label{paragraphPerfectRef}

In the first series of experiments, we set $M:=(0.2,0.4)$ and 
\[
\om_L :=\{(x,y)\in\R\times (0;1)\cup (-1/2;1/2)\times [1;L)\}\setminus \overline{B(M,0.3)}
\]
(see Figure \ref{GeomTZero}). Here $B(M,0.3)$ corresponds to the open ball centered at $M$ of radius $0.3$. Note that there is no symmetry in the geometry. For each $L$ in a given range, we compute numerically the  coefficients of the scattering matrix $\mathbb{S}\in\Cplx^{2\times2}$ defined in (\ref{DefMatrixScattering}). To proceed, we use a $\mrm{P}2$ finite element method in a truncated waveguide. On the artificial boundary created by the truncation, a Dirichlet-to-Neumann operator with $\mrm{15}$ terms serves as a transparent condition. We take $k=0.8\pi$ and $\ell=1$. In Figure \ref{MatriceScatteringTZero}, we display the scattering coefficients for $L\in(1.1;6)$. In accordance with the results obtained in \S\ref{paragraphAsymptoAnalysis}, we observe that when $L\to+\infty$, asymptotically $L\mapsto s_{11}(L)$, $L\mapsto s_{12}(L)$ and $L\mapsto s_{22}(L)$ run on circles. More precisely, computing the coefficients of the scattering matrix $\mathbb{S}^{\infty}\in\mathbb{C}^{3\times 3}$ defined in (\ref{UnboundedScatteringMatrix}), we indeed check that asymptotically $L\mapsto s_{11}(L)$, $L\mapsto s_{12}(L)$ and $L\mapsto s_{22}(L)$ run respectively on the circles $\gamma_{11}$, $\gamma_{12}$ and $\gamma_{22}$ obtained in (\ref{setSPlusMoins}). The asymptotic sets $\gamma_{11}$, $\gamma_{12}$, $\gamma_{22}$ are displayed in Figure \ref{MatriceScatteringTZero} but are mostly covered by the marks of $s_{11}(L)$, $s_{12}(L)$, $s_{22}(L)$ (we remind the reader that the convergence is exponentially fast). We also note that, as predicted by Theorem \ref{MainThmPart1}, the curve $L\mapsto s_{12}(L)$ indeed passes through zero as $L\to+\infty$.\\
In Figure \ref{logT}, we display the curve $L\mapsto -\ln |s_{12}(L)|$ for $L\in(1.1;6)$. The peaks correspond to the values of $L$ such that $s_{12}(L)= 0$ (zero transmission). According to the proof of Theorem \ref{MainThmPart1}, we expect that the peaks are almost periodic with a distance between two peaks tending to $\pi/k=1.25$ as $L\to+\infty$. The numerical results we get are coherent with this value. Finally, in Figure \ref{GeomTZero}, we represent the real part of the total field $u_1$ defined in (\ref{DefScatteringCoeff}) for $L=2.496$ (second peak of Figure \ref{logT}). We can observe that the field is indeed exponentially decaying as $x\to+\infty$.

\begin{figure}[!ht]
\centering
\includegraphics[scale=0.75]{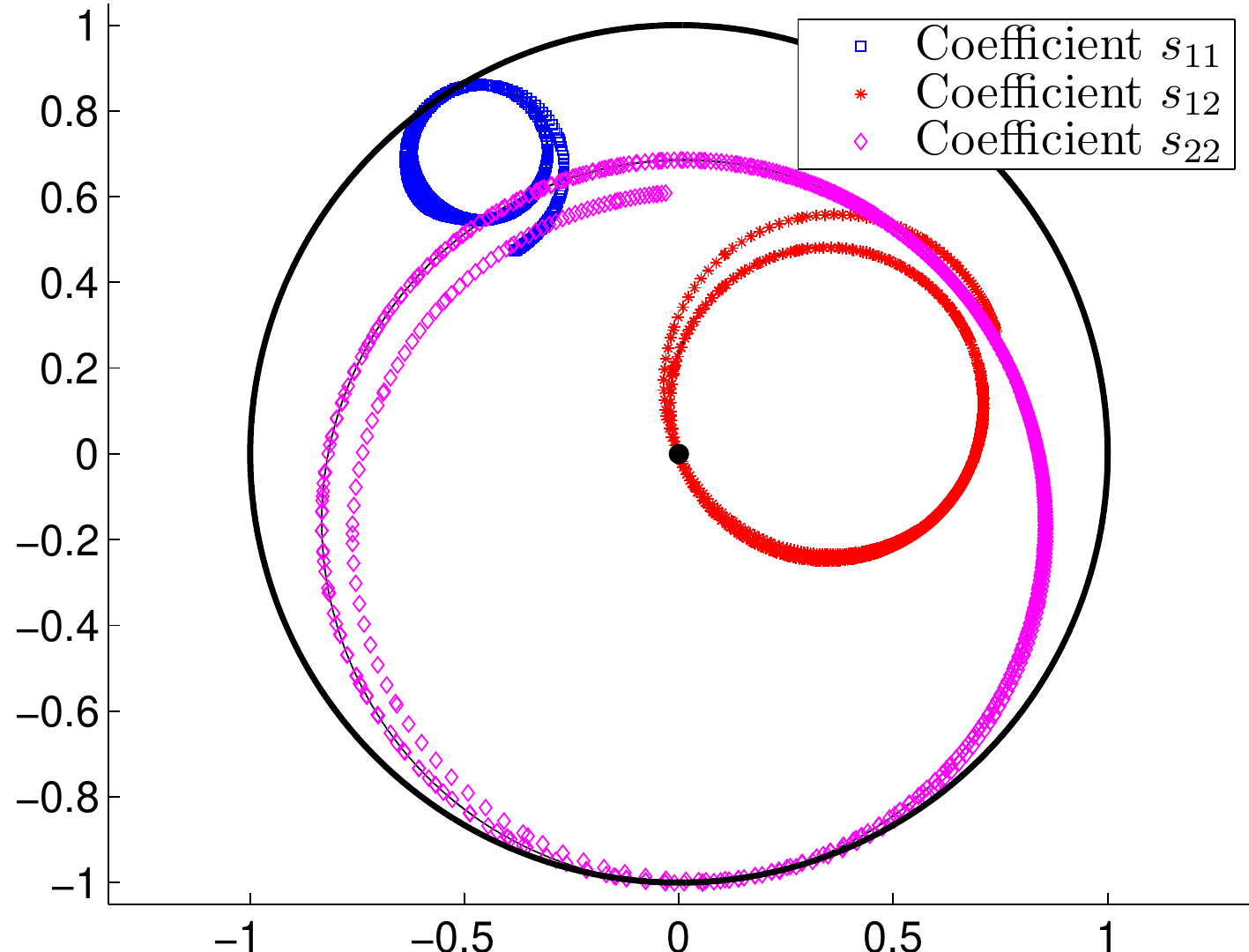}
\caption{Coefficients $L\mapsto s_{11}(L)$, $L\mapsto s_{12}(L)$ and $L\mapsto s_{22}(L)$ for $L\in(1.1;6)$. The thin black circles (mostly hidden by the symbols) correspond to the asymptotic circles $\gamma_{11}$, $\gamma_{12}$ and  $\gamma_{22}$ defined in (\ref{setSPlusMoins}). According to the conservation of energy, we know that the scattering coefficients are located inside the unit disk marked by the black bold line. \label{MatriceScatteringTZero}}
\end{figure}

\begin{figure}[!ht]
\centering
\includegraphics[trim=0 2.7cm 0 3cm, clip,scale=1]{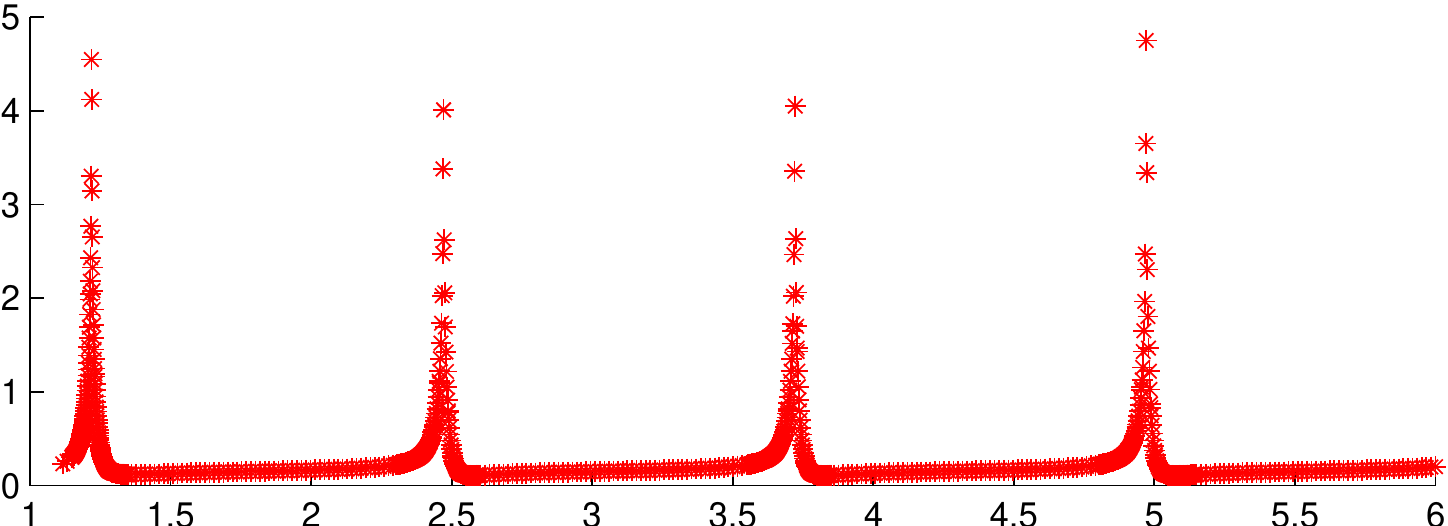}
\caption{Curve $L\mapsto -\ln |s_{12}(L)|$ for $L\in(1.1;6)$. The peaks correspond to the values of $L$ for which there is zero transmission in $\om_L$.\label{logT}}
\end{figure}

\begin{figure}[!ht]
\centering
\includegraphics[width=0.96\textwidth]{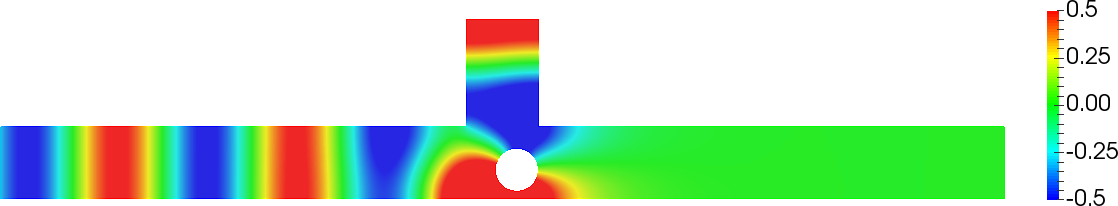}
\caption{Real part of the total field $u_1$ defined in (\ref{DefScatteringCoeff}) for a setting where $s_{12}(L)=0$ ($L=2.496$). The incident field is coming from the left. \label{GeomTZero}}
\end{figure}

\begin{figure}[!ht]
\centering
\includegraphics[width=0.83\textwidth]{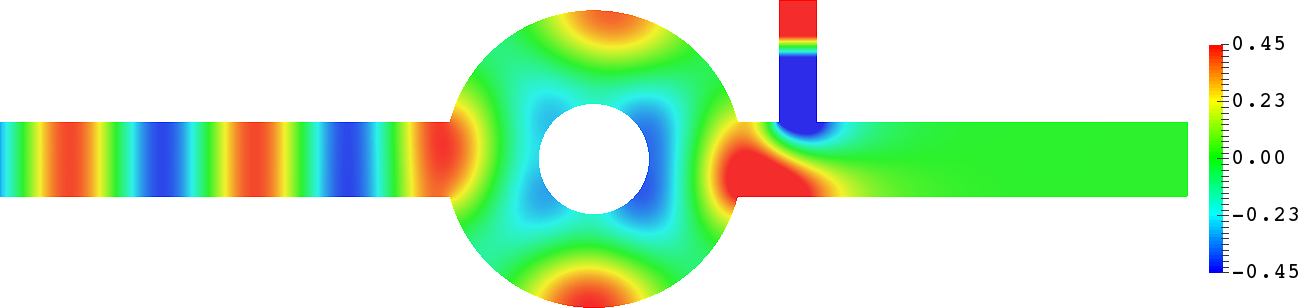}\hspace{-2.6cm}
\includegraphics[width=0.3\textwidth]{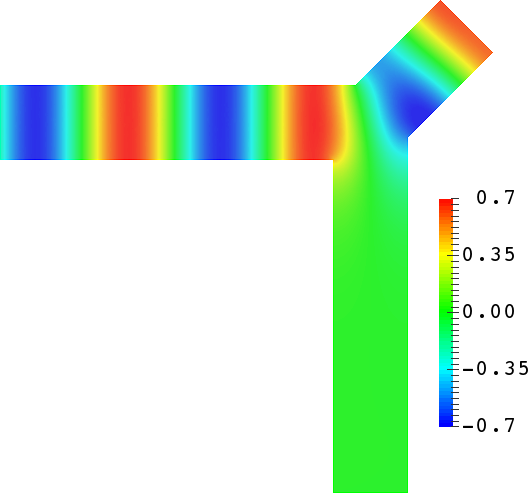}
\caption{Real part of the total field in two other geometries where the transmission coefficient is zero. For the picture on left, we played with the height of the vertical branch. For the picture on right, we played with the length of the diagonal branch (the waveguide is unbounded in the left and down directions). The incident field is coming from the left.\label{OtherGeom}}
\end{figure}

\subsection{Trapped modes}\label{paragraphTrappedModes}
In the second series of experiments, we set $M:=(0.2,0.4)$ as in \S\ref{paragraphPerfectRef}, $L:=2.496$ (this is the value obtained in the numerical experiments leading to Figure \ref{GeomTZero}) and, for $\mathcal{L}>0$,
\[
\Om_\mathcal{L} :=\{(x,y)\in(-\infty;1/2+\mathcal{L})\times (0;1)\cup (-1/2;1/2)\times [1;L)\}\setminus \overline{B(M,0.3)}.
\]
The domain $\Om_\mathcal{L}$ is pictured in Figure \ref{TrappedMode1}. According to the results of \S\ref{paragraphPerfectRef}, we know that $s_{12}=0$ in $\Om_{\infty}$ (zero transmission) at the wavenumber $k=0.8\pi$. For each $\mathcal{L}$ in a given range, we compute numerically the  coefficients of the augmented scattering matrix $\mathcal{S}\in\Cplx^{2\times2}$ defined in (\ref{DefAugmentedMatrixScattering}). To proceed, again we use a $\mrm{P}2$ finite element method set in a truncated waveguide. We emphasize here that we need to work with a well-suited Dirichlet-to-Neumann map to deal with the wave packet $W_2^+$ appearing in the decompositions of $U_1$, $U_2$ in (\ref{DefSolutionScaAug}). In Figures \ref{Scattering078pi}, \ref{Scattering08pi}, \ref{Scattering082pi}, we display the coefficients $\mathcal{L}\mapsto S_{11}(\mathcal{L})$, $\mathcal{L}\mapsto S_{12}(\mathcal{L})$, $\mathcal{L}\mapsto S_{22}(\mathcal{L})$ for $\mathcal{L}\in(0.1;3.5)$ and respectively $k=0.78\pi$, $k=0.8\pi$, $k=0.82\pi$. In these figures, we also display the asymptotic circles $\Gamma_{11}$, $\Gamma_{12}$ and $\Gamma_{22}$ defined in (\ref{setSPlusMoins}). In accordance with the results of \S\ref{paragraphAsymptoCoefAug}, we observe that asymptotically as $L\to+\infty$, the coefficients $\mathcal{L}\mapsto S_{11}(\mathcal{L})$, $\mathcal{L}\mapsto S_{12}(\mathcal{L})$, $\mathcal{L}\mapsto S_{22}(\mathcal{L})$ run respectively on $\Gamma_{11}$, $\Gamma_{12}$, $\Gamma_{22}$. Moreover, the curves $\mathcal{L}\mapsto S_{12}(\mathcal{L})$ pass through zero (Proposition \ref{PropositionAugmScaUnit}). In Figure \ref{Scattering08pi}, we see that for $k=0.8\pi$, the circle $\Gamma_{22}$ passes through the point of affix $-1+0i$ as shown in Proposition \ref{PropositionThroughMinusOne}. 
Comparing Figures \ref{Scattering078pi}, \ref{Scattering08pi} and \ref{Scattering082pi}, we observe that the center $Z_{22}$ of $\Gamma_{22}$ passes from the upper half plane to the lower half plane as $k$ goes from $0.8\pi^{-}$ to $0.8\pi^{+}$. As a consequence, we are tempted to think that the map $k\mapsto \Im m\,Z_{22}(k)$ changes sign at $k=0.8\pi$ (Assumption (\ref{HypoAbstraite})). In Figure \ref{TrappedMode1} we display a trapped mode in $\Om_{\mathcal{L}}$ for $\mathcal{L}=1.354$ at the wavenumber $k=2.512... \approx 0.8\pi$. Figure \ref{TrappedMode2} represents the symmetrised version with respect to the line $\{x=H+\mathcal{L}\}$ of the trapped mode of Figure \ref{TrappedMode1}.

\begin{figure}[!ht]
\centering
\includegraphics[scale=0.8]{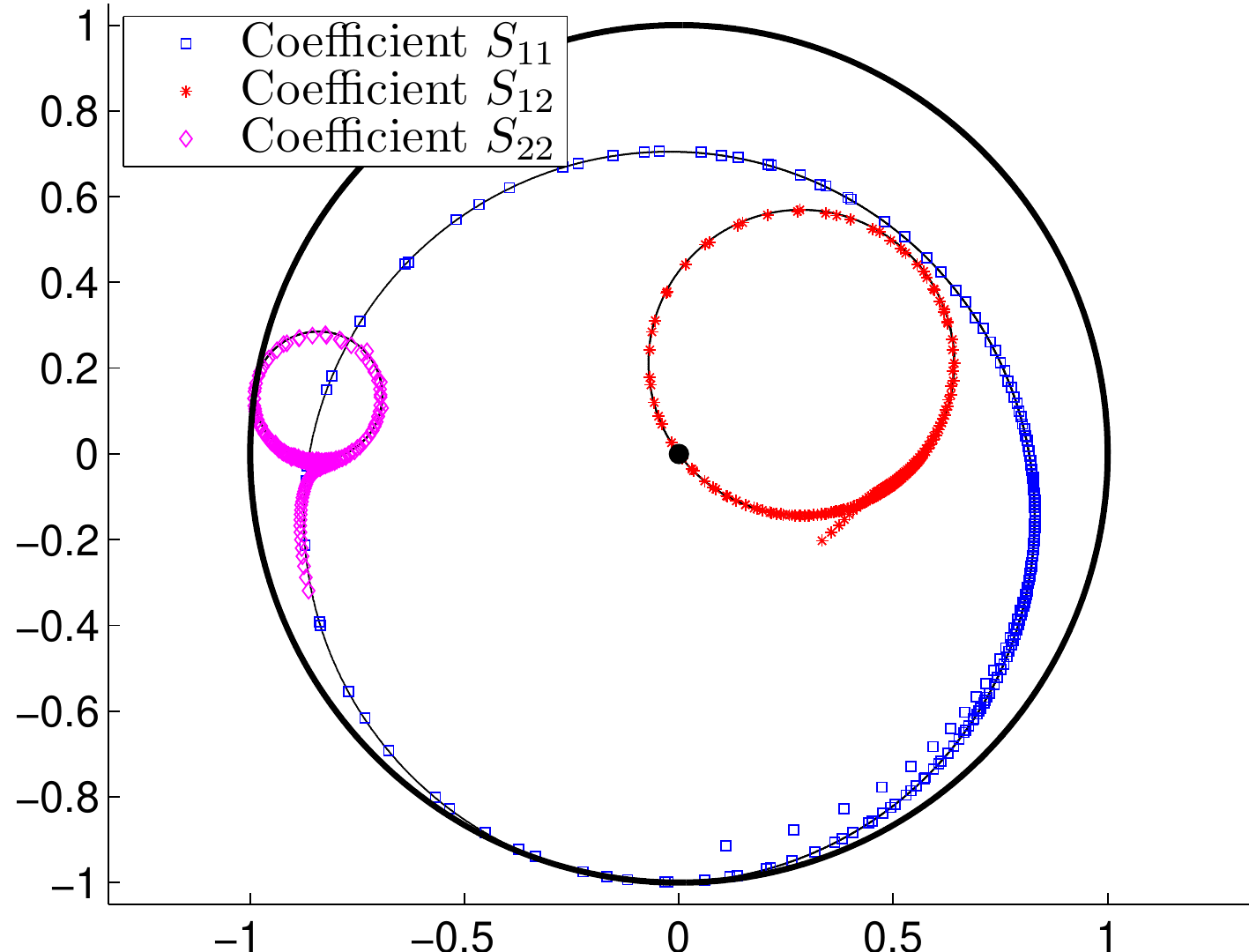}
\caption{Coefficients $\mathcal{L}\mapsto S_{11}(\mathcal{L})$, $\mathcal{L}\mapsto S_{12}(\mathcal{L})$ and $\mathcal{L}\mapsto S_{22}(\mathcal{L})$ for $\mathcal{L}\in(0.1;3.5)$. The thin black circles (mostly hidden by the symbols) correspond to the asymptotic circles $\Gamma_{11}$, $\Gamma_{12}$ and  $\Gamma_{22}$ defined in (\ref{setSPlusMoins}). Here $k=0.78\pi$. \label{Scattering078pi}}
\end{figure}

\begin{figure}[!ht]
\centering
\includegraphics[scale=0.8]{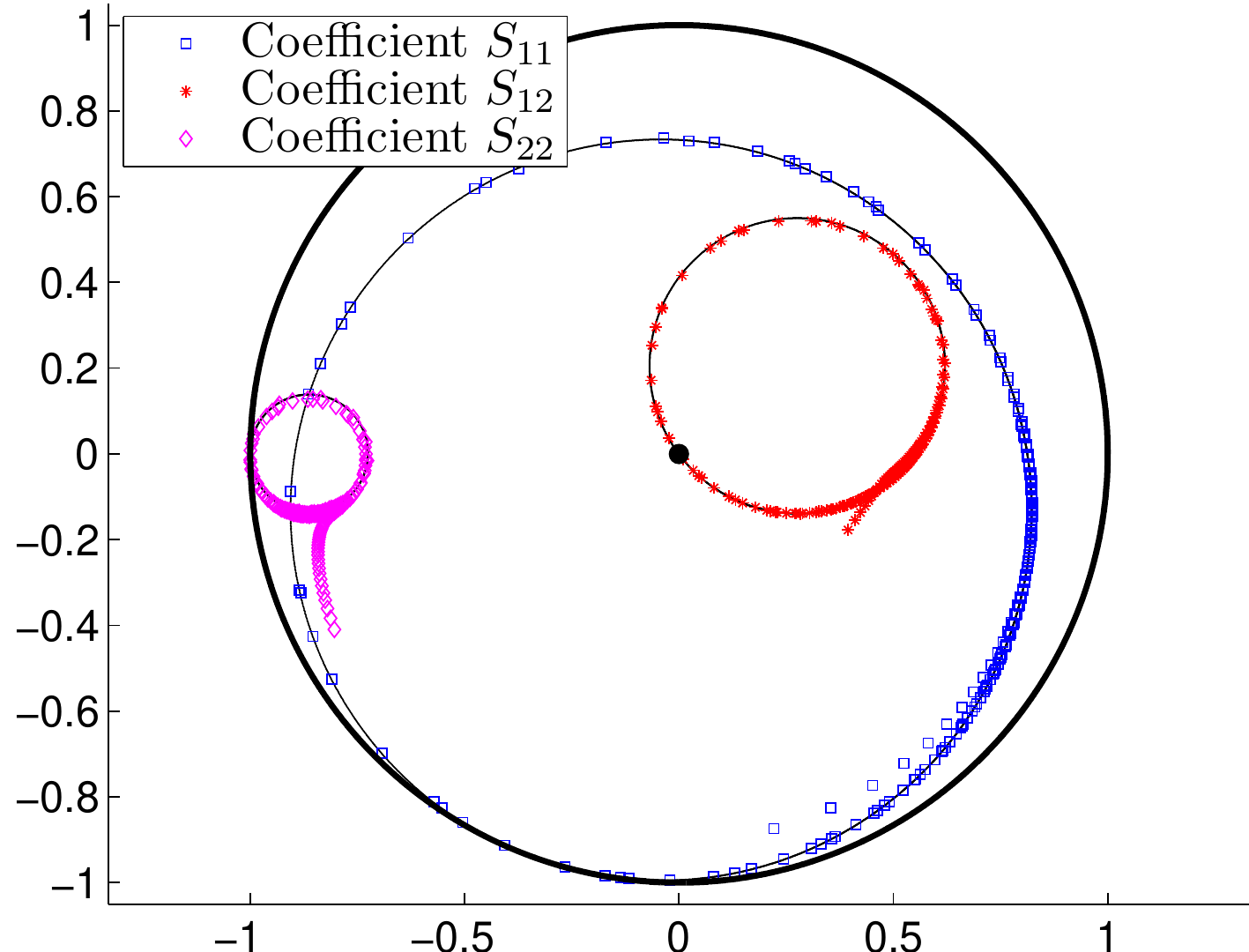}
\caption{Coefficients $\mathcal{L}\mapsto S_{11}(\mathcal{L})$, $\mathcal{L}\mapsto S_{12}(\mathcal{L})$ and $\mathcal{L}\mapsto S_{22}(\mathcal{L})$ for $\mathcal{L}\in(0.1;3.5)$. The thin black circles (mostly hidden by the symbols) correspond to the asymptotic circles $\Gamma_{11}$, $\Gamma_{12}$ and  $\Gamma_{22}$ defined in (\ref{setSPlusMoins}). Here $k=0.8\pi$. \label{Scattering08pi}}
\end{figure}

\begin{figure}[!ht]
\centering
\includegraphics[scale=0.8]{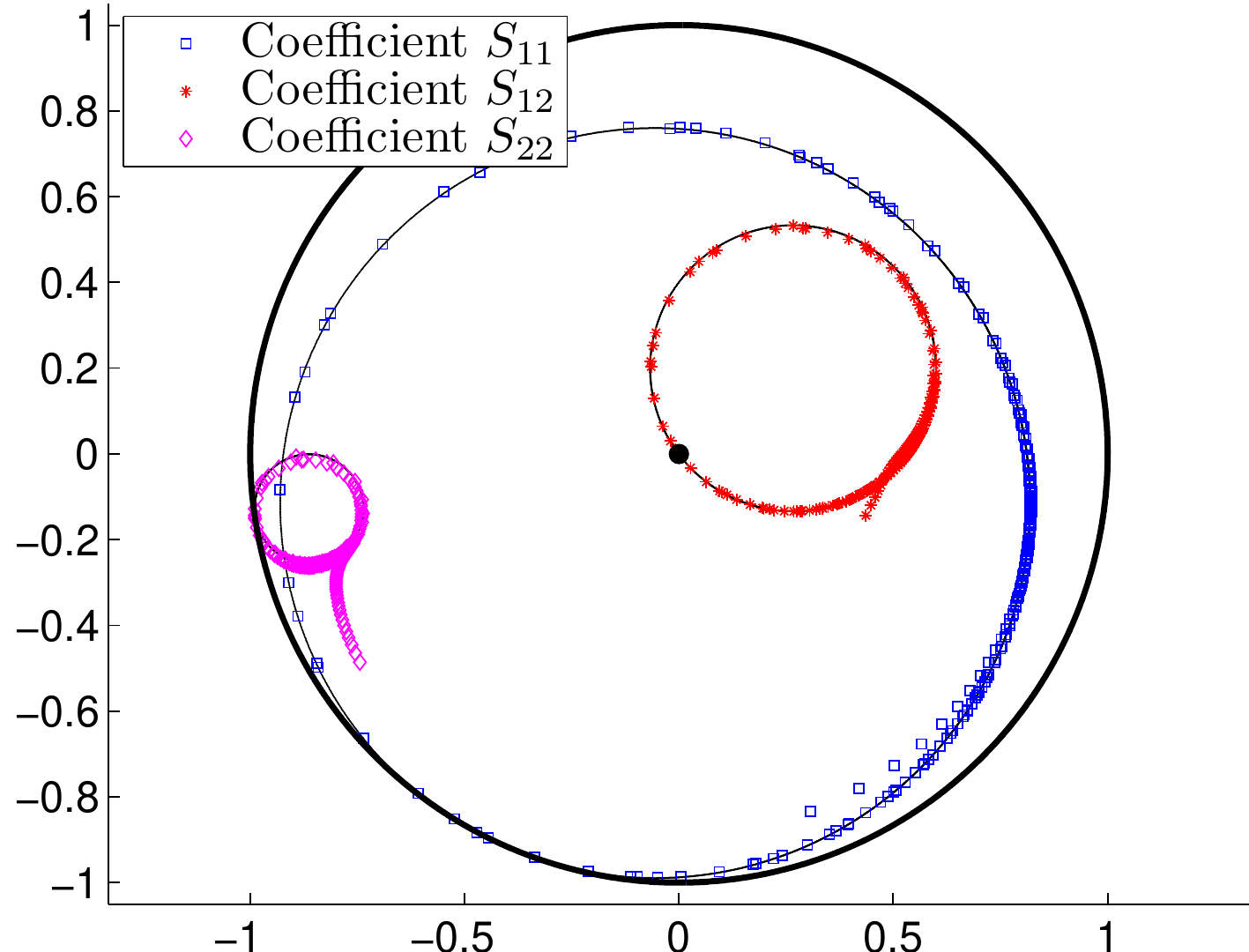}
\caption{Coefficients $\mathcal{L}\mapsto S_{11}(\mathcal{L})$, $\mathcal{L}\mapsto S_{12}(\mathcal{L})$ and $\mathcal{L}\mapsto S_{22}(\mathcal{L})$ for $\mathcal{L}\in(0.1;3.5)$. The thin black circles (mostly hidden by the symbols) correspond to the asymptotic circles $\Gamma_{11}$, $\Gamma_{12}$ and  $\Gamma_{22}$ defined in (\ref{setSPlusMoins}). Here $k=0.82\pi$. \label{Scattering082pi}}
\end{figure}

\begin{figure}[!ht]
\centering
\includegraphics[scale=0.5]{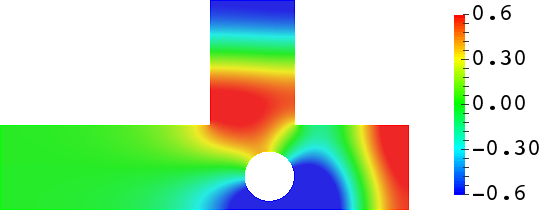}
\caption{Trapped mode for Problem (\ref{PbInitial}) in $\Om_{\mathcal{L}}$. Here $k=2.5125645\approx 0.8\pi$ and $\mathcal{L}=1.354$. \label{TrappedMode1}}
\end{figure}

\begin{figure}[!ht]
\centering
\includegraphics[scale=0.5]{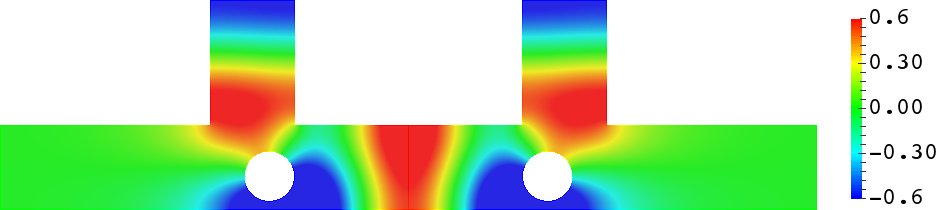}
\caption{Symmetrization of the trapped mode of Figure \ref{TrappedMode1}. \label{TrappedMode2}}
\end{figure}

\newpage
\clearpage

\section{Conclusion}\label{SectionConclusion}

In this article, first we explained how to construct waveguides with two open channels such that the transmission coefficient in monomode regime is zero. In that case, the energy carried by an ingoing mode propagating in one channel is completely backscattered, this is the mirror effect. The principal novelty of this study is that there is no assumption of symmetry of the geometry. Then in a second step, from a geometry where it is known that the transmission coefficient is zero, truncating one of the channel, we showed how to create half-waveguides supporting trapped modes. All the techniques presented here can be adapted in higher dimension (waveguides of $\R^d$ with $d\ge3$). Moreover, Neumann boundary conditions can be replaced by Dirichlet boundary conditions, the developments are exactly the same. For the construction of completely reflecting geometries, we have assumed that outside a compact domain, the waveguide coincides with the strip $\R\times(0;1)$. This is not needed in the analysis and configurations like the ones of Figure \ref{OtherGeom} right or Figure \ref{FigConclu} top can be considered as well. More precisely, the two open channels can be oriented in different directions and their height can differ. Similarly, to provide examples of half-waveguides supporting trapped modes, we can start from geometries as the ones of Figure \ref{FigConclu} top such that the transmission coefficient is zero and truncate one branch.\\ 
To continue this work, there are many directions to investigate. Dealing with the case of waveguides with $N$ open channels for $N\ge3$ (see Figure \ref{FigConclu} bottom) seems an interesting one. For this problem, again several questions can be considered. For example, can one find a geometry such that the energy carried by an ingoing mode propagating in one channel is completely backscattered? In this case, we have to cancel $N-1$ transmission coefficients ($s_{12},\dots,s_{1N}$). Playing with one branch is probably not enough. Can we do it with $N-1$ branches? And then, starting from a geometry like the one of Figure \ref{FigConclu} bottom where the mirror effect occurs ($s_{12}=\cdots=s_{1N}=0$), truncating channel 1, can we create waveguides supporting trapped modes? This sounds more easily achievable. Another interesting direction would be to study what happens at higher wavenumber when several modes can propagate. For the moment, this is highly open.

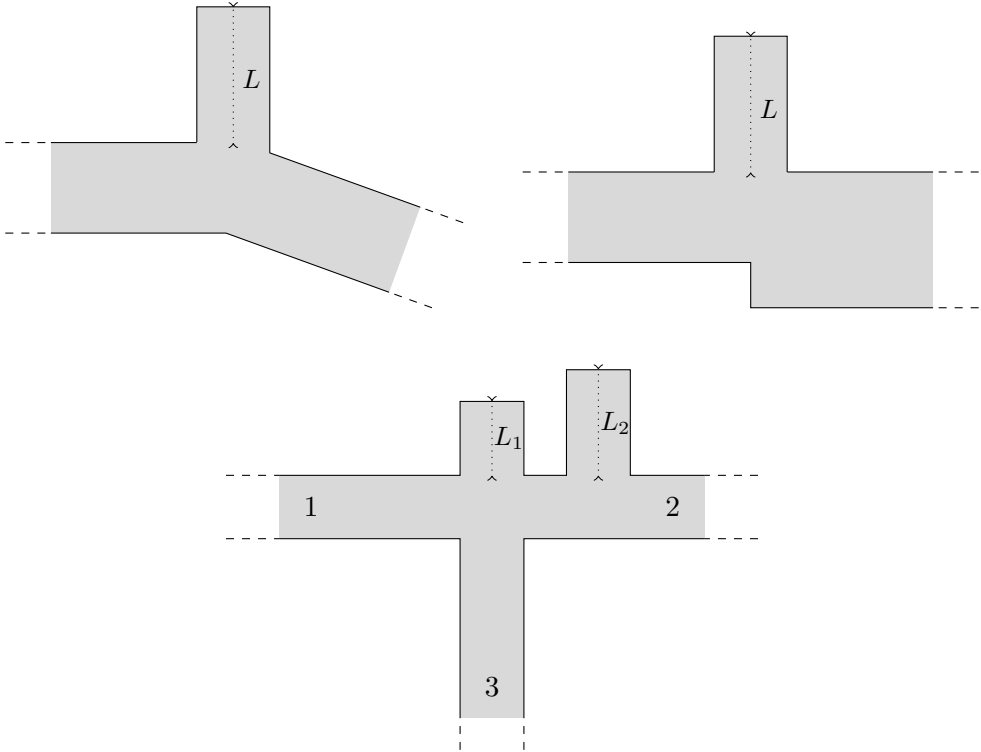
\begin{figure}[!ht]
\centering
\begin{tikzpicture}[scale=1.2]
\draw[fill=gray!30](-0.4,-0.4) rectangle (0.4,2);
\draw[fill=gray!30,draw=none,rotate=0](-2,-0.5) rectangle (0,0.5);
\draw[fill=gray!30,draw=none,rotate=-20](0,-0.5) rectangle (2,0.5);
\draw[dashed,rotate=-20] (2.5,0.5)--(2,0.5); 
\draw[dashed,rotate=-20] (2.5,-0.5)--(2,-0.5); 
\draw[dashed] (-2.5,0.5)--(-2,0.5); 
\draw[dashed] (-2.5,-0.5)--(-2,-0.5); 
\draw (-2,0.5)--(-0.4,0.5);
\draw[rotate=-20] (2,0.5)--(0.24,0.5);
\draw (-2,-0.5)--(-0.08,-0.5) [rotate=-20]-- (2,-0.5);
\draw[dotted,>-<] (0,0.44)--(0,2.05);
\node at (0.2,1.2){\small $L$};
\end{tikzpicture}\qquad\begin{tikzpicture}[scale=1.2]
\draw[fill=gray!30](-0.4,-0.4) rectangle (0.4,2);
\draw[fill=gray!30,draw=none,rotate=0](-2,-0.5) rectangle (0,0.5);
\draw[fill=gray!30,draw=none](0,-0.5) rectangle (2,0.5);
\draw[fill=gray!30,draw=none](0,-1) rectangle (2,0.5);
\draw[dashed] (-2.5,0.5)--(-2,0.5); 
\draw[dashed] (-2.5,-0.5)--(-2,-0.5); 
\draw[dashed] (2.5,0.5)--(2,0.5); 
\draw[dashed] (2.5,-1)--(2,-1); 
\draw (-2,0.5)--(-0.4,0.5);
\draw (2,0.5)--(0.4,0.5);
\draw (-2,-0.5)-- (0,-0.5)--(0,-1)--(2,-1);
\draw[dotted,>-<] (0,0.44)--(0,2.05);
\node at (0.2,1.2){\small $L$};
\end{tikzpicture}\\[20pt]
\raisebox{2mm}{\begin{tikzpicture}[scale=1.4]
\draw[fill=gray!30,draw=none](-2,-0.3) rectangle (2,0.3);
\draw[fill=gray!30,draw=none](-0.3,-2) rectangle (0.3,1);
\draw[fill=gray!30,draw=none](0.7,0) rectangle (1.3,1.3);
\draw (-2,0.3)--(-0.3,0.3)--(-0.3,1)--(0.3,1)--(0.3,0.3)--(0.7,0.3)--
(0.7,1.3)--(1.3,1.3)--(1.3,0.3)--(2,0.3);
\draw (-2,-0.3)--(-0.3,-0.3)--(-0.3,-2);
\draw (2,-0.3)--(0.3,-0.3)--(0.3,-2);
\draw[dotted,>-<] (0,0.25)--(0,1.05);
\node at (0.16,0.65){\small $L_1$};
\draw[dotted,>-<] (1,0.25)--(1,1.35);
\node at (1.16,0.8){\small $L_2$};
\node at (-1.7,0){ $1$};
\node at (1.7,0){ $2$};
\node at (0,-1.7){ $3$};

\draw[dashed] (2.5,0.3)--(2,0.3); 
\draw[dashed] (2.5,-0.3)--(2,-0.3); 
\draw[dashed] (-2.5,0.3)--(-2,0.3); 
\draw[dashed] (-2.5,-0.3)--(-2,-0.3);
\draw[dashed] (0.3,-2.3)--(0.3,-2); 
\draw[dashed] (-0.3,-2.3)--(-0.3,-2);
\end{tikzpicture}}
\caption{Top: other types of waveguides where the approaches presented above work. Bottom: example of waveguide with three open channels. \label{FigConclu}} 
\end{figure} 

\newpage

\noindent In Figure \ref{DomainCircle}, we represent another geometry $\om_L$ where the branch of finite length used in the article has been replaced by a half disk of radius $L$. We also add a fixed non penetrable (Neumann) obstacle in the domain to break the symmetry of the geometry. In Figure \ref{MatriceScatteringTZeroCircle} left, we computed the scattering coefficients (see (\ref{DefScatteringCoeff})) with respect to $L\in(0.5;2)$. Again we observe that even though $\om_L$ is completely not symmetric, the transmission coefficient passes through zero as $L$ increases. This is confirmed when we draw the curve $L\mapsto -\ln |s_{12}(L)|$ (Figure \ref{MatriceScatteringTZeroCircle} top right). In Figure \ref{MatriceScatteringTZeroCircle} bottom right, we represent the real part of the total field in a situation where the transmission coefficient is zero. All that results are numerical results. It would be interesting to prove them rigorously. However the asymptotic analysis for this problem is different from the one presented above.

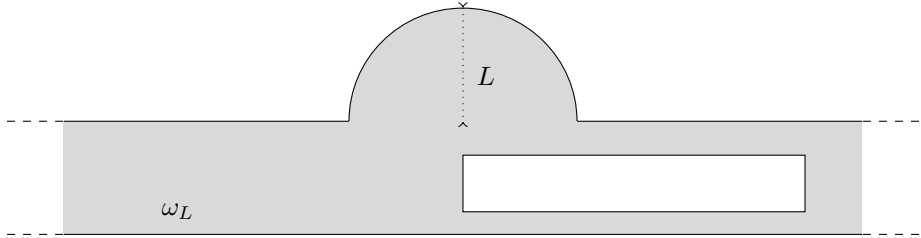
\begin{figure}[!ht]
\centering
\begin{tikzpicture}[scale=1.5]
\draw[fill=gray!30] (1,1) arc (0:180:1);
\draw[fill=gray!30,draw=none](-3.5,0) rectangle (3.5,1);
\draw[fill=white](0,0.2) rectangle (3,0.7);
\draw (-3.5,0)--(3.5,0); 
\draw (-3.5,1)--(-1,1); 
\draw (3.5,1)--(1,1); 
\draw[dashed] (4,0)--(3.5,0); 
\draw[dashed] (4,1)--(3.5,1);
\draw[dashed] (-4,0)--(-3.5,0); 
\draw[dashed] (-4,1)--(-3.5,1);
\draw[dotted,>-<] (0,0.95)--(0,2.05);
\node at (0.2,1.4){\small $L$};
\node at (-2.5,0.2){\small $\om_{L}$};
\end{tikzpicture}
\caption{Geometry of $\om_{L}$. \label{DomainCircle}} 
\end{figure}

\begin{figure}[!ht]
\centering
\includegraphics[trim=0 0cm 1.5cm 0cm, clip,scale=0.54]{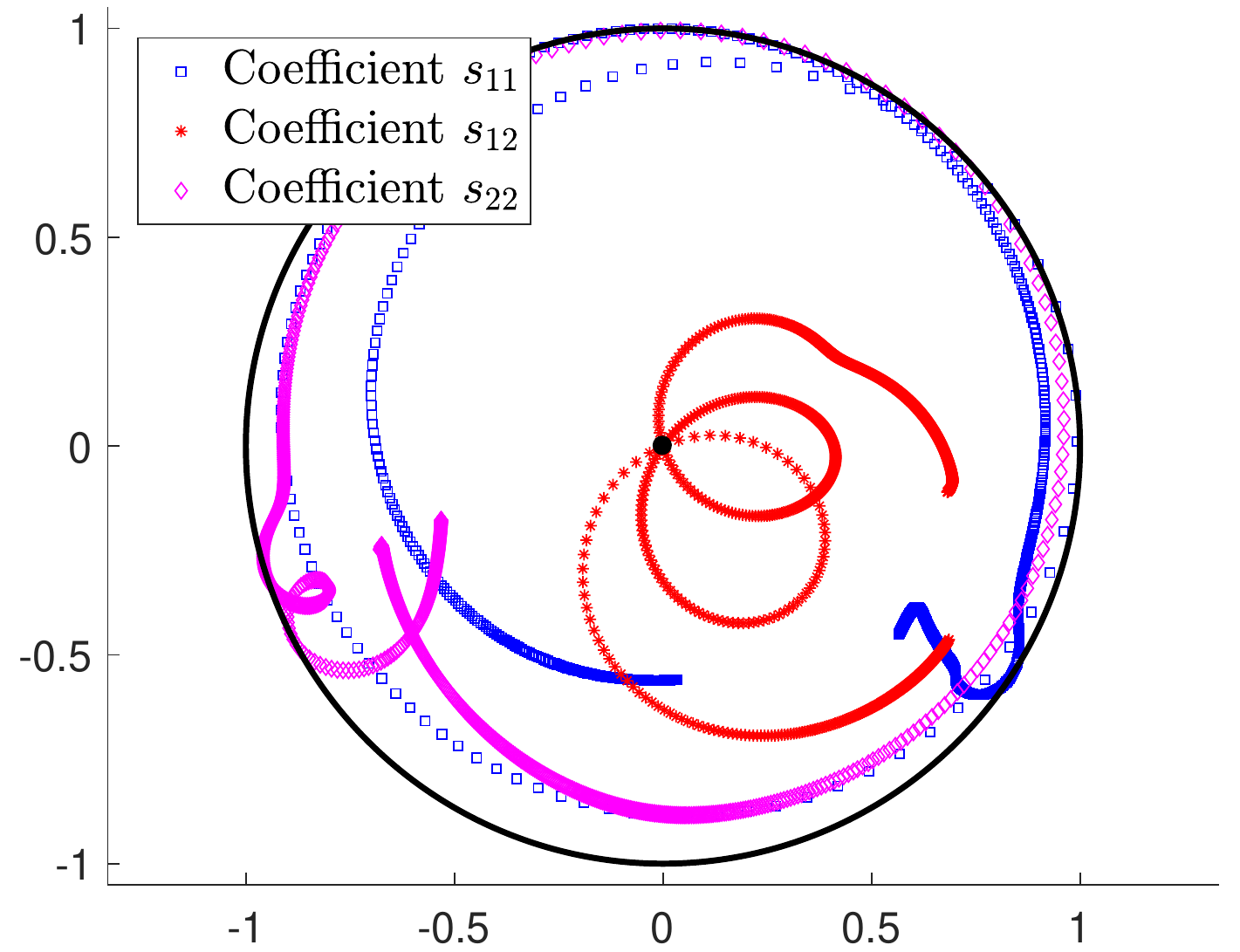}\quad\raisebox{2.7cm}{\begin{tabular}{cc}
\includegraphics[trim=0 2.6cm 0 3cm, clip,width=8.5cm]{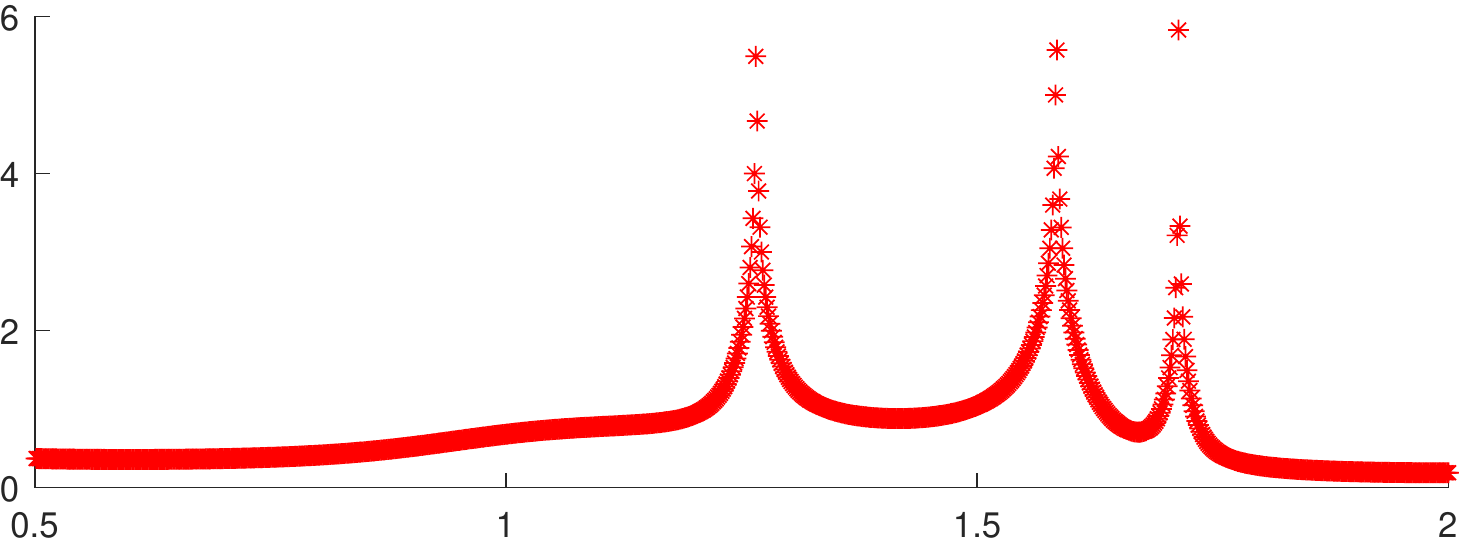}\\[8pt]
\includegraphics[width=8.5cm]{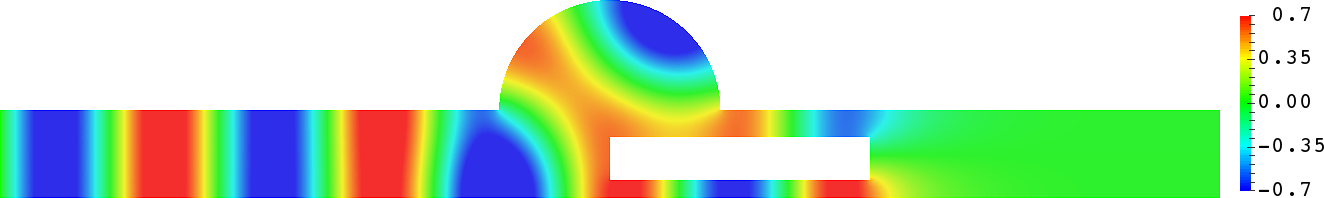}
\end{tabular}}
\caption{Left: coefficients $L\mapsto s_{11}(L)$, $L\mapsto s_{12}(L)$ and $L\mapsto s_{22}(L)$ for $L\in(0.5;2)$ in the geometry of Figure \ref{DomainCircle}. According to the conservation of energy, we know that the scattering coefficients are located inside the unit disk marked by the black bold line. Top right: curve $L\mapsto -\ln |s_{12}(L)|$ for $L\in(0.5;2)$. Bottom right: real part of the total field $u_1$ defined in (\ref{DefScatteringCoeff}) for a setting where $s_{12}(L)=0$ ($L=1.265488$). The incident field is coming from the left. \label{MatriceScatteringTZeroCircle}}
\end{figure}

\section{Annex}

\subsection{Properties of the scattering matrices}
In this paragraph, for the convenience of the reader, we provide the details of the proofs of two results used in the previous analysis. We start with a well-known lemma.
\begin{lemma}\label{LemmaUnitary}
The scattering matrix $\mathbb{S}$ defined in (\ref{DefMatrixScattering}) is unitary and symmetric. 
\end{lemma}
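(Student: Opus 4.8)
The plan is to derive both properties from two flux pairings evaluated on far cross-sections of channels $1$ and $2$, using crucially that $k$ is real. First I would introduce, for two solutions $u,v$ of (\ref{PbInitial}), the sesquilinear flux form $q(u,v)=\int_{\Sigma_{-a}\cup\Sigma_{b}}(\partial_n u\,\overline v-u\,\partial_n \overline v)\,d\sigma$, analogous to (\ref{DefSymplecticForm}), together with its unconjugated reciprocity counterpart $b(u,v)=\int_{\Sigma_{-a}\cup\Sigma_{b}}(\partial_n u\,v-u\,\partial_n v)\,d\sigma$, where $\Sigma_{-a}:=\{-a\}\times(0;1)$ and $\Sigma_{b}:=\{b\}\times(0;1)$ are cross-sections far out in channels $1$ and $2$, with outward normals $\partial_n=-\partial_x$ and $\partial_n=+\partial_x$ respectively. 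Since $\Delta u+k^2u=0$, $\Delta v+k^2v=0$ with $k\in\R$, Green's identity on the bounded region enclosed by $\Sigma_{-a}$ and $\Sigma_{b}$, combined with the Neumann condition $\partial_n u=\partial_n v=0$ on the lateral walls $\partial\om_L$, shows that $q(u,v)=0$ and $b(u,v)=0$ for every pair of scattering solutions and every large $a,b$.

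The second step is to evaluate these forms via the modal expansions (\ref{DefScatteringCoeff}). A direct computation with the normalised planar modes $w^{\pm}_{1}$, $w^{\pm}_{2}$ shows that on a cross-section an outgoing mode of amplitude $c$ tested against an outgoing mode of amplitude $d$ contributes $+i\,c\overline d$ to $q$, an ingoing pair contributes $-i\,c\overline d$, while the mixed outgoing/ingoing terms contribute nothing since their $e^{\pm 2ikx}$ factors cancel. Symmetrically, for $b$ the like-type (out/out and in/in) terms cancel and only the mixed pairs survive, each channel contributing $i(c_u d_v-d_u c_v)$, where $(c,d)$ denote the outgoing/ingoing amplitudes of $u$ and $v$ in that channel. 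Letting $a,b\to+\infty$ removes the exponentially decaying remainders $\tilde u_1,\tilde u_2$, so these modal contributions become exact.

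Reading off $q(u_i,u_j)=0$ for $i,j\in\{1,2\}$ then yields $|s_{11}|^2+|s_{12}|^2=1$, $|s_{21}|^2+|s_{22}|^2=1$ and $s_{11}\overline{s_{21}}+s_{12}\overline{s_{22}}=0$, which is exactly the orthonormality of the rows of $\mathbb S$, i.e. $\mathbb S\,\overline{\mathbb S}^{\top}=\mrm{Id}^{2\times2}$ (unitarity). For symmetry I would plug into $b$ the amplitudes of $u_1$ (ingoing $1$ in channel $1$; outgoing $s_{11}$ in channel $1$ and $s_{12}$ in channel $2$) and of $u_2$ (ingoing $1$ in channel $2$; outgoing $s_{21}$, $s_{22}$), obtaining $b(u_1,u_2)=i(s_{12}-s_{21})$; the vanishing $b(u_1,u_2)=0$ then forces $s_{21}=s_{12}$.

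The main obstacle is the justification of the limit: although Green's identity makes $q$ and $b$ independent of the cross-section positions, one must verify that the cross terms pairing the evanescent remainders $\tilde u_i$ against the propagating modes, as well as the $\tilde u_i$ against $\tilde u_j$ terms, tend to zero as $a,b\to+\infty$. This follows from the decay $\tilde u_i=O(e^{-\sqrt{\pi^2-k^2}|x|})$ and the corresponding decay of the gradients, so that in the limit only the purely propagating, $x$-independent contributions remain; the rest is routine bookkeeping of the modal coefficients.
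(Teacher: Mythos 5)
Your proof is correct and follows essentially the same route as the paper: Green's identity applied to the sesquilinear flux form $q$ gives the row-orthonormality relations $|s_{11}|^2+|s_{12}|^2=1$, $|s_{21}|^2+|s_{22}|^2=1$, $s_{11}\overline{s_{21}}+s_{12}\overline{s_{22}}=0$ (unitarity), and your unconjugated reciprocity form $b(u_1,u_2)$ is exactly the paper's $q(u_1,\overline{u_2})$, whose vanishing forces $s_{21}=s_{12}$. The only cosmetic difference is that the paper evaluates $q$ on the fixed cross-sections $\Sigma_{\pm 2H}$ via Fourier decomposition, where the evanescent contributions cancel identically by orthogonality in $y$ and the vanishing of the Wronskian of $e^{-\beta_n x}$ with itself, whereas you send the cross-sections to infinity and invoke exponential decay.
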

\begin{proof}
Let us work with the symplectic form $q(\cdot,\cdot)$ introduced in (\ref{DefSymplecticForm}) such that 
\[
q(u,v)=\int_{\Sigma_{-2H}\cup\Sigma_{2H}} \cfrac{\partial u}{\partial n}\,\overline{v}-u\,\cfrac{\partial \overline{v}}{\partial n}\,d\sigma,\qquad\forall u,v\in\mH^1_{\loc}(\Om_L).
\]
Integrating by parts and using that the functions $u_1$, $u_2$ defined in (\ref{DefScatteringCoeff}) satisfy the Helmholtz equation, we obtain $q(u_i,u_j)=0$ for $i,\,j\in\{1,2\}$. On the other hand, decomposing $u_1$, $u_2$ in Fourier series on $\Sigma_{\pm 2H}$, we find 
\[
\begin{array}{c}
q(u_1,u_1) = (-1+|s_{11}|^2+|s_{12}|^2)\,i,\quad q(u_2,u_2) = (-1+|s_{22}|^2+|s_{21}|^2)\,i\\[4pt]
q(u_1,u_2)=-\overline{q(u_2,u_1)}=s_{11}\overline{s_{21}}+s_{12}\overline{s_{22}}.
\end{array}
\]
These relations allow us to prove that $\mathbb{S}\,\overline{\mathbb{S}}^{\top}=\mrm{Id}^{2\times 2}$, that is to conclude that $\mathbb{S}$ is unitary. On the other hand, one finds $q(u_1,\overline{u_2})=0=-s_{21}+s_{12}$. We deduce that $\mathbb{S}$ is symmetric.
\end{proof}

\begin{lemma}\label{lemmaUnitary}
For all $L>1$, the matrix $\mathbb{S}^{\mrm{asy}}(L)\in\Cplx^{2\times2}$ defined in (\ref{defMatriceLim}) is unitary.
\end{lemma}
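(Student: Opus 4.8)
The plan is to recognize $\mathbb{S}^{\mrm{asy}}(L)$ as a rank-one perturbation of the upper-left $2\times 2$ block of the unitary matrix $\mathbb{S}^{\infty}$, and then to verify $\mathbb{S}^{\mrm{asy}}\,\overline{\mathbb{S}^{\mrm{asy}}}^{\top}=\mrm{Id}^{2\times 2}$ by a direct expansion in which the block relations furnished by the unitarity of $\mathbb{S}^{\infty}$ make everything collapse. To set up, I write $A:=\left(\begin{smallmatrix} s^{\infty}_{11} & s^{\infty}_{12}\\ s^{\infty}_{21} & s^{\infty}_{22}\end{smallmatrix}\right)$, introduce the column vectors $\mathbf{c}:=(s^{\infty}_{13},s^{\infty}_{23})^{\top}$ and $\mathbf{d}:=(s^{\infty}_{31},s^{\infty}_{32})^{\top}$, and the scalar $\mu:=(e^{-2ikL}-s^{\infty}_{33})^{-1}$, which is well defined because $|s^{\infty}_{33}|\ne 1$. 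From (\ref{defMatriceLim}) we then have $\mathbb{S}^{\mrm{asy}}(L)=A+\mu\,\mathbf{c}\,\mathbf{d}^{\top}$, and the feature I will exploit is that $z:=e^{-2ikL}$ obeys $|z|=1$ since $k$ and $L$ are real.

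Writing $\mathbb{S}^{\infty}$ in block form $\left(\begin{smallmatrix} A & \mathbf{c}\\ \mathbf{d}^{\top} & s^{\infty}_{33}\end{smallmatrix}\right)$, I would read off from $\mathbb{S}^{\infty}\,\overline{\mathbb{S}^{\infty}}^{\top}=\mrm{Id}^{3\times3}$ the four identities $A\,\overline{A}^{\top}=\mrm{Id}^{2\times2}-\mathbf{c}\,\overline{\mathbf{c}}^{\top}$, $A\,\overline{\mathbf{d}}=-\overline{s^{\infty}_{33}}\,\mathbf{c}$, $\mathbf{d}^{\top}\overline{A}^{\top}=-s^{\infty}_{33}\,\overline{\mathbf{c}}^{\top}$ and $\mathbf{d}^{\top}\overline{\mathbf{d}}=1-|s^{\infty}_{33}|^{2}$. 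Expanding $\mathbb{S}^{\mrm{asy}}\,\overline{\mathbb{S}^{\mrm{asy}}}^{\top}=(A+\mu\,\mathbf{c}\,\mathbf{d}^{\top})(\overline{A}^{\top}+\overline{\mu}\,\overline{\mathbf{d}}\,\overline{\mathbf{c}}^{\top})$ into its four terms and substituting these identities, every contribution beyond the identity becomes a scalar multiple of the outer product $\mathbf{c}\,\overline{\mathbf{c}}^{\top}$, so that
\[
\mathbb{S}^{\mrm{asy}}\,\overline{\mathbb{S}^{\mrm{asy}}}^{\top}=\mrm{Id}^{2\times2}+\big[\,-1-\overline{\mu}\,\overline{s^{\infty}_{33}}-\mu\,s^{\infty}_{33}+|\mu|^{2}\,(1-|s^{\infty}_{33}|^{2})\,\big]\,\mathbf{c}\,\overline{\mathbf{c}}^{\top}.
\]

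It then remains only to check that the bracketed scalar vanishes. Multiplying it by $|\mu|^{-2}=|e^{-2ikL}-s^{\infty}_{33}|^{2}$ and using $|z|=1$ to write $|e^{-2ikL}-s^{\infty}_{33}|^{2}=1-z\,\overline{s^{\infty}_{33}}-\overline{z}\,s^{\infty}_{33}+|s^{\infty}_{33}|^{2}$, all the cross terms and the $|s^{\infty}_{33}|^{2}$ contributions cancel in pairs, leaving zero; this is precisely where the unimodularity $|e^{-2ikL}|=1$ (the unit-modulus reflection produced by the Neumann wall closing channel $3$ at distance $L$) is decisive. Consequently $\mathbb{S}^{\mrm{asy}}\,\overline{\mathbb{S}^{\mrm{asy}}}^{\top}=\mrm{Id}^{2\times2}$, which is the claim. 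I expect no genuine obstacle beyond organizing the block bookkeeping carefully; I note in particular that only the unitarity of $\mathbb{S}^{\infty}$ is used, and not its symmetry.
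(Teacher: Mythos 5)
Your proof is correct, and it takes a genuinely different route from the paper's, which is organized entrywise rather than in block form. The paper expands the scalar quantities $|s^{\mrm{asy}}_{11}|^2+|s^{\mrm{asy}}_{12}|^2$, $|s^{\mrm{asy}}_{12}|^2+|s^{\mrm{asy}}_{22}|^2$ and $s^{\mrm{asy}}_{11}\,\overline{s^{\mrm{asy}}_{12}}+s^{\mrm{asy}}_{12}\,\overline{s^{\mrm{asy}}_{22}}$, simplifies each one using scalar consequences of the unitarity of $\mathbb{S}^{\infty}$ together with $|e^{-2ikL}|=1$, and then must invoke the symmetry $s^{\mrm{asy}}_{21}=s^{\mrm{asy}}_{12}$ (inherited from the symmetry of $\mathbb{S}^{\infty}$) to conclude: without it, those three identities alone do not characterize unitarity of a $2\times2$ matrix. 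You instead write $\mathbb{S}^{\mrm{asy}}(L)=A+\mu\,\mathbf{c}\,\mathbf{d}^{\top}$, extract the four block identities from $\mathbb{S}^{\infty}\,\overline{\mathbb{S}^{\infty}}^{\top}=\mrm{Id}^{3\times3}$ (all four are stated correctly), and obtain in a single expansion
\[
\mathbb{S}^{\mrm{asy}}\,\overline{\mathbb{S}^{\mrm{asy}}}^{\top}=\mrm{Id}^{2\times2}+\big[-1-\overline{\mu}\,\overline{s^{\infty}_{33}}-\mu\,s^{\infty}_{33}+|\mu|^{2}(1-|s^{\infty}_{33}|^{2})\big]\,\mathbf{c}\,\overline{\mathbf{c}}^{\top},
\]
where the bracket indeed vanishes precisely because $|e^{-2ikL}|=1$; I verified this scalar cancellation and it is right. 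Both arguments rest on the same two ingredients (unitarity of $\mathbb{S}^{\infty}$ and unimodularity of $e^{-2ikL}$), but your packaging buys two things. First, as you correctly observe, symmetry is never needed: for a square matrix, $M\overline{M}^{\top}=\mrm{Id}$ already implies full unitarity, so your proof isolates the fact that the symmetry of $\mathbb{S}^{\infty}$ plays no role in this lemma, only in the symmetry of $\mathbb{S}^{\mrm{asy}}(L)$. Second, the block computation is dimension-independent: the same four identities hold for the relevant blocks of any unitary matrix, so your argument applies verbatim to the augmented matrix $\mathcal{S}^{\mrm{asy}}(\mathcal{L})$ of Section \ref{SectionTrappedModes} (where the paper merely says to work as in Lemma \ref{lemmaUnitary}) and would extend with no extra effort to the $N$-channel configurations discussed in the conclusion, where entrywise bookkeeping becomes unwieldy.
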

\begin{proof}
A direct computation using the definitions of the coefficients of $\mathbb{S}^{\mrm{asy}}(L)$ (see (\ref{defCoeffLim})) gives
\[
\begin{array}{lcl}
|s^{\mrm{asy}}_{11}|^2+|s^{\mrm{asy}}_{12}|^2&=&|s^{\infty}_{11}|^2+|s^{\infty}_{12}|^2+\cfrac{|s^{\infty}_{13}|^2\,(|s^{\infty}_{13}|^2+|s^{\infty}_{23}|^2)}{|e^{-2ikL}-s^{\infty}_{33}|^2}+2\Re e\,\cfrac{s^{\infty}_{13}\,(\overline{s^{\infty}_{11}}\,s^{\infty}_{13}+\overline{s^{\infty}_{12}}\,s^{\infty}_{23})}{e^{-2ikL}-s^{\infty}_{33}}\\[12pt]
&=&|s^{\infty}_{11}|^2+|s^{\infty}_{12}|^2+\cfrac{|s^{\infty}_{13}|^2\,(1-|s^{\infty}_{33}|^2)}{|e^{-2ikL}-s^{\infty}_{33}|^2}-2|s^{\infty}_{13}|^2\Re e\,\cfrac{\overline{s^{\infty}_{33}}}{e^{-2ikL}-s^{\infty}_{33}}\\[12pt]
&=&|s^{\infty}_{11}|^2+|s^{\infty}_{12}|^2+|s^{\infty}_{13}|^2\,\cfrac{|e^{-2ikL}-s^{\infty}_{33}|^2}{|e^{-2ikL}-s^{\infty}_{33}|^2}\ =\ 1.
\end{array}
\]
To obtain the equalities above, we used several times the fact that $\mathbb{S}^{\infty}\in\Cplx^{3\times3}$ is unitary. Analogously, one finds $|s^{\mrm{asy}}_{12}|^2+|s^{\mrm{asy}}_{22}|^2=1$. On the other hand, we have
\[
\begin{array}{ll}
& s^{\mrm{asy}}_{11}\,\overline{s^{\mrm{asy}}_{12}}+s^{\mrm{asy}}_{12}\,\overline{s^{\mrm{asy}}_{22}}\\[6pt]
=&s^{\infty}_{11}\,\overline{s^{\infty}_{12}}+s^{\infty}_{12}\,\overline{s^{\infty}_{22}}+\cfrac{|s^{\infty}_{13}|^2\,s^{\infty}_{13}\,\overline{s^{\infty}_{23}}+|s^{\infty}_{23}|^2\,s^{\infty}_{13}\,\overline{s^{\infty}_{23}}}{|e^{-2ikL}-s^{\infty}_{33}|^2} +\cfrac{s^{\infty}_{13}\,(s^{\infty}_{13}\,\overline{s^{\infty}_{12}}+s^{\infty}_{23}\,\overline{s^{\infty}_{22}})}{e^{-2ikL}-s^{\infty}_{33}}+\cfrac{\overline{s^{\infty}_{23}}\,(s^{\infty}_{11}\,\overline{s^{\infty}_{13}}+s^{\infty}_{12}\,\overline{s^{\infty}_{23}})}{e^{2ikL}-\overline{s^{\infty}_{33}}}\\[14pt]
=&s^{\infty}_{11}\,\overline{s^{\infty}_{12}}+s^{\infty}_{12}\,\overline{s^{\infty}_{22}}+s^{\infty}_{13}\,\overline{s^{\infty}_{23}}\,\cfrac{1-|s^{\infty}_{33}|^2}{|e^{-2ikL}-s^{\infty}_{33}|^2} -\cfrac{s^{\infty}_{13}\,s^{\infty}_{33}\,\overline{s^{\infty}_{23}}}{e^{-2ikL}-s^{\infty}_{33}}-\cfrac{\overline{s^{\infty}_{23}}\,s^{\infty}_{13}\,\overline{s^{\infty}_{33}}}{e^{2ikL}-\overline{s^{\infty}_{33}}}\\[14pt]
=& s^{\infty}_{11}\,\overline{s^{\infty}_{12}}+s^{\infty}_{12}\,\overline{s^{\infty}_{22}}+s^{\infty}_{13}\,\overline{s^{\infty}_{23}}\ =\ 0.
\end{array}
\]
Since $\mathbb{S}^{\mrm{asy}}(L)$ is symmetric, this is sufficient to conclude that $\mathbb{S}^{\mrm{asy}}(L)$ is unitary.
\end{proof}

\subsection{Particular cases in the asymptotic analysis of the scattering matrices}\label{paragraphDegenerated}

When $|s^{\infty}_{33}|=1$, the asymptotic (\ref{defCoeffLim}) of the scattering matrix $\mathbb{S}$ with respect to $L$ is different from what has been obtained above. More precisely, if $|s^{\infty}_{33}|=1$, since $\mathbb{S}^{\infty}$ is unitary and symmetric, then $s^{\infty}_{31}=s^{\infty}_{32}=0$, $s^{\infty}_{13}=s^{\infty}_{23}=0$. In such a situation, we can show that as $L\to+\infty$, the matrix $\mathbb{S}=\mathbb{S}(L)$ defined in (\ref{DefMatrixScattering}) tends to 
\[
\left(\begin{array}{cc}
s^{\infty}_{11} & s^{\infty}_{12}\\
s^{\infty}_{21} & s^{\infty}_{22}
\end{array}\right)\in\mathbb{C}^{2\times 2}.
\]
A similar phenomenon appears in the asymptotic (\ref{AsymptoticAugm}) of $\mathcal{S}$ when $|S^{\infty}_{33}|=1$. In this situation, we have $S^{\infty}_{31}=S^{\infty}_{32}=0$, $S^{\infty}_{13}=S^{\infty}_{23}=0$ and one can prove that $\mathcal{S}$ tends to 
\[
\left(\begin{array}{cc}
S^{\infty}_{11} & S^{\infty}_{12}\\
S^{\infty}_{21} & S^{\infty}_{22}
\end{array}\right)\in\mathbb{C}^{2\times 2}.
\]
In this article, we do not consider these exceptional cases which are not interesting for our analysis.

\bibliography{Bibli}

\def\cprime{$'$}
\begin{thebibliography}{10}

\bibitem{AbSh16}
{G.S.} Abeynanda and {S.P.} Shipman.
\newblock {Dynamic resonance in the high-Q and near-monochromatic regime}.
\newblock {\em MMET, IEEE}, 10.1109/MMET.2016.7544100, 2016.

\bibitem{AsPV00}
A.~Aslanyan, L.~Parnovski, and D.~Vassiliev.
\newblock Complex resonances in acoustic waveguides.
\newblock {\em Quart. J. Mech. Appl. Math.}, 53(3):429--447, 2000.

\bibitem{BuSa11}
E.~Bulgakov and A.~Sadreev.
\newblock Formation of bound states in the continuum for a quantum dot with
  variable width.
\newblock {\em Phys. Rev. B}, 83(23):235321, 2011.

\bibitem{CaLo08}
G.~Cattapan and P.~Lotti.
\newblock Bound states in the continuum in two-dimensional serial structures.
\newblock {\em Eur. Phys. J. B}, 66(4):517--523, 2008.

\bibitem{ChNaSu}
L.~Chesnel and {S.A.} Nazarov.
\newblock Non reflection and perfect reflection via {F}ano resonance in
  waveguides.
\newblock {\em Commun. Math. Sci., to appear, arXiv preprint arXiv:1801.08889},
  2018.

\bibitem{ChNPSu}
{L.} Chesnel, {S.A.} Nazarov, and V.~Pagneux.
\newblock Invisibility and perfect reflectivity in waveguides with finite
  length branches.
\newblock {\em SIAM J. Appl. Math.}, 78(4):2176--2199, 2018.

\bibitem{ChPaSu}
{L.} Chesnel and V.~Pagneux.
\newblock Simple examples of perfectly invisible and trapped modes in
  waveguides.
\newblock {\em Quart. J. Mech. Appl. Math.}, 71(3):297--315, 2018.

\bibitem{DaPa98}
{E.B.} Davies and L.~Parnovski.
\newblock Trapped modes in acoustic waveguides.
\newblock {\em Q. J. Mech. Appl. Math.}, 51(3):477--492, 1998.

\bibitem{DKLM07}
Y.~Duan, W.~Koch, {C.M.} Linton, and M.~McIver.
\newblock Complex resonances and trapped modes in ducted domains.
\newblock {\em J. Fluid. Mech.}, 571:119--147, 2007.

\bibitem{Evan92}
{D.V.} Evans.
\newblock Trapped acoustic modes.
\newblock {\em IMA J. Appl. Math.}, 49(1):45--60, 1992.

\bibitem{EvLV94}
{D.V.} Evans, M.~Levitin, and D.~Vassiliev.
\newblock Existence theorems for trapped modes.
\newblock {\em J. Fluid. Mech.}, 261:21--31, 1994.

\bibitem{EvLi91}
{D.V.} Evans and {C.M.} Linton.
\newblock Trapped modes in open channels.
\newblock {\em J. Fluid. Mech.}, 225:153--175, 1991.

\bibitem{Fano61}
U.~Fano.
\newblock Effects of configuration interaction on intensities and phase shifts.
\newblock {\em Phys. Rev.}, 124(6):1866--1878, 1961.

\bibitem{FrWi85}
H.~Friedrich and D.~Wintgen.
\newblock Interfering resonances and bound states in the continuum.
\newblock {\em Phys. Rev. A}, 32(6):3231, 1985.

\bibitem{gomis2017anisotropy}
J.~Gomis-Bresco, D.~Artigas, and L.~Torner.
\newblock Anisotropy-induced photonic bound states in the continuum.
\newblock {\em Nat. Photon.}, 11(4):232, 2017.

\bibitem{GPRO10}
{J.W.} Gonz{\'a}lez, M.~Pacheco, L.~Rosales, and {P.A.} Orellana.
\newblock Bound states in the continuum in graphene quantum dot structures.
\newblock {\em Europhys. Lett.}, 91(6):66001, 2010.

\bibitem{HeKN12}
S.~Hein, W.~Koch, and L.~Nannen.
\newblock Trapped modes and {F}ano resonances in two-dimensional acoustical
  duct--cavity systems.
\newblock {\em J. Fluid. Mech.}, 692:257--287, 2012.

\bibitem{Henr74}
P.~Henrici.
\newblock {\em Applied and computational complex analysis}.
\newblock Wiley-Interscience, 1974.
\newblock Volume 1: Power series---integration---conformal mapping---location
  of zeros, Pure and Applied Mathematics.

\bibitem{HZSJS16}
{C.W.} Hsu, B.~Zhen, {A.D.} Stone, {J.D.} Joannopoulos, and
  M.~Solja{\v{c}}i{\'c}.
\newblock Bound states in the continuum.
\newblock {\em Nat. Rev. Mater.}, 1:16048, 2016.

\bibitem{KaNa02}
{I.V.} Kamotski{\u\i} and {S.A.} Nazarov.
\newblock An augmented scattering matrix and exponentially decreasing solutions
  of an elliptic problem in a cylindrical domain.
\newblock {\em Zap. Nauchn. Sem. S.-Peterburg. Otdel. Mat. Inst. Steklov.
  (POMI)}, 264(Mat. Vopr. Teor. Rasprostr. Voln. 29):66--82, 2000.
\newblock (English transl.: J. Math. Sci. 2002. V. 111, N 4. P. 3657--3666).

\bibitem{KuMc97}
N.~Kuznetsov and P.~McIver.
\newblock On uniqueness and trapped modes in the water-wave problem for a
  surface-piercing axisymmetric body.
\newblock {\em Q. J. Mech. Appl. Math.}, 50(4), 1997.

\bibitem{LiMc07}
{C.M.} Linton and P.~McIver.
\newblock Embedded trapped modes in water waves and acoustics.
\newblock {\em Wave motion}, 45(1):16--29, 2007.

\bibitem{MaNP00}
{V.G.} {Maz'ya}, {S.A.} Nazarov, and {B.A.} Plamenevski{\u\i}.
\newblock {\em {Asymptotic theory of elliptic boundary value problems in
  singularly perturbed domains, Vol. 1}}.
\newblock {Birkh\"{a}user}, Basel, 2000.
\newblock Translated from the original German 1991 edition.

\bibitem{McIv96}
M.~McIver.
\newblock An example of non-uniqueness in the two-dimensional linear water wave
  problem.
\newblock {\em J. Fluid. Mech.}, 315:257--266, 1996.

\bibitem{MiFK10}
{A.E.} Miroshnichenko, S.~Flach, and {Y.S.} Kivshar.
\newblock Fano resonances in nanoscale structures.
\newblock {\em Rev. Mod. Phys.}, 82(3):2257--2298, 2010.

\bibitem{Mois09}
N.~Moiseyev.
\newblock {Suppression of Feshbach resonance widths in two-dimensional
  waveguides and quantum dots: a lower bound for the number of bound states in
  the continuum}.
\newblock {\em Phys. Rev. Lett.}, 102(16):167404, 2009.

\bibitem{Naza06}
{S.A.} Nazarov.
\newblock A criterion for the existence of decaying solutions in the problem on
  a resonator with a cylindrical waveguide.
\newblock {\em Funct. Anal. Appl.}, 40(2):97--107, 2006.

\bibitem{Naza10c}
{S.A.} Nazarov.
\newblock Sufficient conditions on the existence of trapped modes in problems
  of the linear theory of surface waves.
\newblock {\em J. Math. Sci.}, 167(5):713--725, 2010.

\bibitem{Naza10a}
{S.A.} Nazarov.
\newblock {Trapped modes in a T-shaped waveguide}.
\newblock {\em Acoust. Phys.}, 56(6):1004--1015, 2010.

\bibitem{Naza11}
{S.A.} {Nazarov}.
\newblock {Asymptotic expansions of eigenvalues in the continuous spectrum of a
  regularly perturbed quantum waveguide}.
\newblock {\em {Theor. Math. Phys.}}, 167(2):606--627, 2011.

\bibitem{Naza13}
{S.A.} Nazarov.
\newblock Enforced stability of a simple eigenvalue in the continuous spectrum
  of a waveguide.
\newblock {\em Funct. Anal. Appl.}, 47(3):195--209, 2013.

\bibitem{NaPl94}
{S.A.} Nazarov and {B.A.} Plamenevski{\u\i}.
\newblock {\em Elliptic problems in domains with piecewise smooth boundaries},
  volume~13 of {\em Expositions in Mathematics}.
\newblock De Gruyter, Berlin, Germany, 1994.

\bibitem{NaPl94bis}
{S.A.} Nazarov and {B.A.} Plamenevski{\u\i}.
\newblock Selfadjoint elliptic problems: scattering and polarization operators
  on the edges of the boundary.
\newblock {\em Algebra i Analiz}, 6(4):157--186, 1994.
\newblock (English transl.: Sb. Math. J. 1995. V. 6, N 4. P. 839--863).

\bibitem{NaSh11}
{S.A.} Nazarov and {A.V.} Shanin.
\newblock Calculation of characteristics of trapped modes in t-shaped
  waveguides.
\newblock {\em Computational Mathematics and Mathematical Physics},
  51(1):96--110, 2011.

\bibitem{OkPR03}
J.~Oko{\l}owicz, M.~P{\l}oszajczak, and I.~Rotter.
\newblock Dynamics of quantum systems embedded in a continuum.
\newblock {\em Phys. Rep.}, 374(4):271--383, 2003.

\bibitem{OrNK06}
G.~Ordonez, K.~Na, and S.~Kim.
\newblock Bound states in the continuum in quantum-dot pairs.
\newblock {\em Phys. Rev. A}, 73(2):022113, 2006.

\bibitem{pagneux2013trapped}
V.~Pagneux.
\newblock Trapped modes and edge resonances in acoustics and elasticity.
\newblock In {\em Dynamic Localization Phenomena in Elasticity, Acoustics and
  Electromagnetism}, pages 181--223. Springer, 2013.

\bibitem{Park66}
R.~Parker.
\newblock Resonance effects in wake shedding from parallel plates: some
  experimental observations.
\newblock {\em J. Sound Vib.}, 4(1):62--72, 1966.

\bibitem{Park67}
R.~Parker.
\newblock Resonance effects in wake shedding from parallel plates: Calculation
  of resonant frequencies.
\newblock {\em J. Sound Vib.}, 5(2):330--343, 1967.

\bibitem{Port02}
R.~Porter.
\newblock Trapping of water waves by pairs of submerged cylinders.
\newblock {\em {Proc. R. Soc. A}}, 458(2019):607--624, 2002.

\bibitem{SaBR06}
{A.F.} Sadreev, {E.N.} Bulgakov, and I.~Rotter.
\newblock Bound states in the continuum in open quantum billiards with a
  variable shape.
\newblock {\em Phys. Rev. B}, 73(23):235342, 2006.

\bibitem{Sato12}
Y.~Sato, Y.~Tanaka, J.~Upham, Y.~Takahashi, T.~Asano, and S.~Noda.
\newblock Strong coupling between distant photonic nanocavities and its dynamic
  control.
\newblock {\em Nat. Photon.}, 6(1):56--61, 2012.

\bibitem{ShTu12}
{S.P.} Shipman and H.~Tu.
\newblock Total resonant transmission and reflection by periodic structures.
\newblock {\em SIAM J. Appl. Math.}, 72(1):216--239, 2012.

\bibitem{ShVe05}
{S.P.} Shipman and S.~Venakides.
\newblock Resonant transmission near nonrobust periodic slab modes.
\newblock {\em Phys. Rev. E}, 71(2):026611, 2005.

\bibitem{ShWe13}
{S.P.} Shipman and {A.T.} Welters.
\newblock Resonant electromagnetic scattering in anisotropic layered media.
\newblock {\em J. Math. Phys.}, 54(10):103511, 2013.

\bibitem{Urse51}
F.~Ursell.
\newblock Trapping modes in the theory of surface waves.
\newblock {\em {Proc. Camb. Philos. Soc.}}, 47:347--358, 1951.

\bibitem{zhen2014topological}
B.~Zhen, C.W. Hsu, L.~Lu, A.D. Stone, and M.~Solja{\v{c}}i{\'c}.
\newblock Topological nature of optical bound states in the continuum.
\newblock {\em Phys. Rev. Lett.}, 113(25):257401, 2014.

\end{thebibliography}
\bibliographystyle{plain}

\end{document}